\newcolumntype{M}[1]{>{\centering\arraybackslash}m{#1}}
\title{\textbf{A unified construction for series representations and finite approximations of completely random measures}}
\author[1]{Juho Lee\thanks{Corresponding author, \texttt{juho@aitrics.com}}}
\author[2]{Xenia Miscouridou}
\author[2]{Fran\c{c}ois Caron}
\affil[1]{AITRICS, Seoul, South Korea}
\affil[2]{Department of Statistics, University of Oxford, Oxford, United Kingdom}
\newcommand{\calN}{{\mathcal{N}}}
\newcommand{\bbE}{\mathbb{E}}
\newcommand{\bbN}{\mathbb{N}}
\newcommand{\bbR}{\mathbb{R}}
\newcommand{\Real}{{\mathbb R}}
\theoremstyle{plain}
\newtheorem{thm}{Theorem}[section]
\theoremstyle{plain}
\newtheorem{cor}{Corollary}[section]
\theoremstyle{definition}
\newtheorem{defn}{Definition}[section]
\theoremstyle{plain}
\theoremstyle{plain}
\newtheorem{prop}{Proposition}[section]
\theoremstyle{remark}
\newtheorem{rem}{Remark}[section]
\DeclareMathOperator{\gammadist}{Gamma}
\DeclareMathOperator{\igammadist}{iGamma}
\DeclareMathOperator{\betadist}{Beta}
\DeclareMathOperator{\catdist}{Cat}
\DeclareMathOperator{\expdist}{Exp}
\DeclareMathOperator{\unifdist}{Unif}
\DeclareMathOperator*{\crm}{CRM}
\DeclareMathOperator*{\CRM}{CRM}
\DeclareMathOperator*{\bfry}{BFRY}
\DeclareMathOperator*{\etbfry}{etBFRY}
\DeclareMathOperator*{\gbfry}{gBFRY}
\DeclareMathOperator*{\etgbfry}{etgBFRY}
\DeclareMathOperator*{\igbfry}{igBFRY}
\DeclareMathOperator*{\etigbfry}{etigBFRY}
\newcommand{\iidsim}{\overset{\mathrm{i.i.d.}}{\sim}}
\newcommand{\1}[1]{\mathds{1}_{\{#1\}}}
\newcommand{\defas}{\vcentcolon=}  
\DeclareMathOperator{\gigdist}{GIG}
\crefname{rem}{Remark}{Remark}
\newcommand{\eqn}[1]{\begin{align}#1 \end{align}}
\newcommand{\eq}[1]{\begin{align*}#1 \end{align*}}
\crefname{appsec}{Appendix}{Appendices}
\crefname{appsubsec}{Appendix}{Appendices}
\definecolor{bblue}{rgb}{0,0,0.8}
\begin{document}

\maketitle

\begin{abstract}
Infinite-activity completely random measures (CRMs) have become important building blocks of complex Bayesian nonparametric models. They have been successfully used in various applications such as clustering, density estimation, latent feature models, survival analysis or network science. Popular infinite-activity CRMs include the (generalized) gamma process and the (stable) beta process. However, except in some specific cases, exact simulation or scalable inference with these models is challenging and finite-dimensional approximations are often considered. In this work, we propose a general and unified framework to derive both series representations and finite-dimensional approximations of CRMs. Our framework can be seen as an extension of constructions based on size-biased sampling of Poisson point process~\citep{Perman1992}. It includes as special cases several known series representations as well as novel ones. In particular, we show that one can get novel series representations for the generalized gamma process and the stable beta process. We also provide some analysis of the truncation error.
\end{abstract}

\section{Introduction}

Infinite-activity completely random measures (CRMs), and more generally functionals of infinite-activity Poisson random measures arise as building blocks of numerous modern structured statistical models. Examples include clustering and density estimation~\citep{Regazzini2003,James2005,James2009}, spatial statistics~\citep{Brix1999,Wolpert1998,Moller2003}, latent factor/trait models~\citep{Griffiths2006,Paisley2009,Zhou2012,Campbell2018,Ayed2019}, network modeling~\citep{Caron2012,Zhou2015,Cai2016,Caron2017}, recommendation systems~\citep{Gopalan2014}, prediction, risk management and option pricing of financial assets~\citep{Cont2004} or survival analysis~\citep{Hjort1990,Nieto-Barajas2004}; see \cite{Lijoi2010} for a review. Popular CRMs include the (generalized) gamma random measure (also known as tempered stable)~\citep{Hougaard1986,Brix1999} or the (stable) beta random measure~\citep{Hjort1990,Teh2009}. Other popular random probability measures such as the Dirichlet process or the Pitman-Yor process are obtained by normalization or transformation of CRMs.\\
The use of statistical models based on infinite-activity CRMs poses a number of practical challenges regarding posterior inference and estimation. Except in some specific cases, most algorithms, either based on Gibbs sampling~\citep{Ishwaran2001,Zhou2015}, slice sampling~\citep{Griffin2011,Foti2012}, mean-field variational inference~\citep{Blei2006,Doshi2009,Lee2016} or sequential Monte Carlo~\citep{Barndorff-Nielsen2001},\citep[Section 3.2.]{Andrieu2010}, require the use of a finite-dimensional approximation of the CRM. Finite-dimensional approximations can either be obtained by (i) truncating a series representation of the CRM, with stochastically decreasing weights, or (ii) by considering a finite measure with $n$ atoms and iid weights, converging in distribution to the CRM as $n$ tends to infinity. For example, for the beta process with scale parameter $\alpha>0$ and probability distribution $H$, the inverse L\'evy series representation is~\citep{Teh2007}
\begin{equation}
G = \sum_{i=1}^\infty W_i\delta_{\theta_i},~~~\text{where}~~W_i=\prod_{j=1}^i \beta_j,~~\beta_i\iidsim\betadist(\alpha,1),~~\theta_i\iidsim H,~~i=1,2\ldots,.\label{eq:betaseries}
\end{equation}
A classical finite-dimensional approximation with iid weights is~\citep{Griffiths2006}
\begin{equation}
\widetilde{G}_n = \sum_{i=1}^n \widetilde W_{n,i}\delta_{\widetilde \theta_{n,i}},~~~\text{where}~~\widetilde W_{n,i}\iidsim\betadist(\alpha/n, 1),~~\widetilde \theta_{n,i}\iidsim H,~~i=1,\ldots,n.\label{eq:betaiid}
\end{equation}
Both representations are routinely used in Markov chain Monte Carlo and variational Bayes approximate inference algorithms~\citep{Doshi2009,Paisley2011,Paisley2012}. Series and iid approximations are similarly used for the gamma process~\citep{Zhou2015,Roychowdhury2015} and the generalized gamma process~\citep{Lee2016}. Since the early work of~\cite{Khintchine1937} and \cite{Ferguson1972} on the so-called inverse L\'evy representation, various generic series representations of Poisson random measures have been proposed~\citep{Bondesson1982,Perman1992,Rosinski1990,Rosinski2001}. Nested series representations have also been recently proposed for some specific CRMs \citep{Paisley2011,Paisley2012,Roychowdhury2015}. Finite iid representations can be obtained using the infinite divisibility properties of the CRM~\citep{Kingman1975} but as noted by~\cite{Lee2016},  it generally does not lead to tractable representations, except in the gamma process case. Other ways of obtaining iid constructions are described in~\citep{Huggins2017} for some family of CRMs. \citep{Campbell2019} provided a recent survey of the existing series representations and approximations as well as a truncation analysis.

The objective of this article is to present a general framework to obtain both series and iid representations of CRMs. Our construction builds on the definition of a Poisson random measure on an extended space; it generalizes the size-biased approach of~\cite{Perman1992}, and admits as special cases both the size-biased and inverse-L\'evy representations. We show that under this construction, one can draw connections between existing series and iid representations that appeared unrelated, and it allows to derive new series and iid representations. More precisely, we show how the iid representation of~\cite{Lee2016} is related to the size-biased construction of~\cite{Perman1992}, derive novel series and iid representations of the generalized gamma and stable beta random measures. We also provide an asymptotic analysis of the truncation error for this class of approximations.\\
This article is organised as follows. In \cref{sec:background} we provide background material on completely random measures and some existing series representations for CRMs and describe the objectives. \cref{sec:generalconstruction} describes the general construction for obtaining series and iid representations of CRMs. In \cref{sec:examples} we describe a number of specific constructions, showing how one recovers some existing constructions as a particular case of our framework. In \cref{sec:asymptotics} we provide an analysis of the asymptotic truncation error, and discuss related approaches in~\cref{sec:discussion}. The proofs and additional background material are provided in the appendix.

\textbf{Notations.} For a measure $\nu$ on $S$ and a positive measurable function $h$ on $S$, write $\nu(h)=\int_S h(x)\nu(dx)$. Let $(\xi_1,\xi_2,\ldots,)$ be the ordered points of a unit-rate Poisson point process on $(0,\infty)$, that is $\xi_1,\xi_2-\xi_1,\xi_3-\xi_2,\ldots$ are iid unit-rate exponential random variables. With a slight abuse of notation, we use the same notation for the distribution of a random variable and its pdf. For instance,
the probability density function (pdf) of a gamma random variable $X \sim \gammadist(a, b)$ is written as $ \gammadist(x ; a, b)$.


\section{Background}
\label{sec:background}

\subsection{Completely random measures}


Let $(S,\mathcal S)$ be a measurable space where $S=(0,\infty)\times \Theta$. For any point $X=(W,\theta)\in S$, we refer to $W>0$ as the \textbf{size} of $X$. Let $N$ be a Poisson random measure on $S$ with mean measure $\nu(dw,d\theta)=\rho(dw)\mu_w(d\theta)$ where $\rho$ is a Borel measure on $(0,\infty)$, called size measure, satisfying
\begin{align}
\int_0^\infty (1-e^{-w})\rho(dw)<\infty\text{ and }\int_0^\infty \rho(dw)=\infty\label{eq:conditionLevy}
\end{align}
 and $\mu_w(\cdot)$ is a Markov probability kernel from $(0,\infty)$ to $\Theta$.
The linear functional $$G=\int_{0}^\infty wN(dw,d\theta)$$ is an infinite-activity completely random measure~\citep{Kingman1967} on $\Theta$ with random weights and random atoms. We write $G\sim \CRM(\rho,\mu_w)$. The conditions \eqref{eq:conditionLevy} imply that the atomic random measures $N$ and $W$ have an infinite number of atoms, and $W(\Theta)$ is almost surely finite. If $\mu_w=H$ does not depend on $w$, the CRM is said to be homogeneous. We assume in the rest of this article that one can easily simulate from $\mu_w$ (or $H$) and/or it admits a tractable density with respect to some reference measure (e.g. Lebesgue). Two popular examples of CRMs are the generalized gamma process (GGP)~\citep{Hougaard1986,Brix1999}, also known as (exponentially) tilted stable process, with size measure
\begin{equation}
\rho(dw)=\frac{\alpha}{\Gamma(1-\sigma)}w^{-1-\sigma}e^{-\tau w}dw\label{eq:LevyGGP}
\end{equation}
where $\alpha>0$, $\sigma\in (0,1)$ and $\tau\geq 0$, or $\sigma\leq 0$ and $\tau> 0$, and the stable beta process (SBP)~\citep{Hjort1990,Teh2009} with
\begin{equation}
\rho(dw) = \frac{\alpha}{B(1-\sigma, c+\sigma)} w^{-\sigma-1} (1-w)^{c+\sigma-1} \1{0<w<1} dw\label{eq:stableSBP},
\end{equation}
where $\sigma\in(-\infty,1)$, $c>-\sigma$, $\alpha>0$ and $B(\cdot,\cdot)$ is the beta function. When $\sigma\geq 0$, both random measures are infinite-activity.
 \medskip



\begin{rem}
The constructions described in this paper hold more generally when the first condition in Equation \eqref{eq:conditionLevy} is not satisfied, but $\int_x^\infty \rho(dw)<\infty$ for all $x$. Note that in this case $W(\Theta)=\infty$ almost surely. An example of this more general case is given in \cref{sec:invgammaAT} where $\rho(dw)=w^{-2}dw$.
\end{rem}

\subsection{Objective}

  Our objective is to derive general series representations for the Poisson random measure $N$, or equivalently the CRM $G$, of the form
\begin{equation}
G=\sum_{i=1}^\infty W_i \delta_{\theta_i}
\end{equation}
where the sizes $(W_1,W_2,\ldots)$, are stochastically ordered. That is, for any $w>0$,
$
\Pr(W_{i+1}> w )\leq  \Pr(W_i> w ).
$
We write $W_1 \succeq W_2 \succeq \ldots$ and $X_i=(W_i,\theta_i)$. Denote $G_n$ the measure obtained by truncating the above series after $n$ points
\begin{align}
G_n&=\sum_{i=1}^n W_i\delta_{\theta_i}=\sum_{i=1}^n \overline W_{n,i}\delta_{\overline \theta_{n,i}}\label{eq:truncation}
\end{align}
where $(\overline X_{n,1},\ldots, \overline  X_{n,n})$ is a finitely exchangeable random sequence defined by $\overline X_{n,i}= X_{\pi_n(i)}$ where $\pi_n$ is a random permutation of the set $\{1,\ldots,n\}$. We will refer to the sequence $(X_1,\ldots,X_n)$ (or $(W_1,\ldots,W_n)$)  as the \textbf{sequential truncated representation}, and $(\overline X_{n,1},\ldots,\overline X_{n,n})$ (or $(\overline W_{n,1},\ldots,\overline W_{n,n})$) as the \textbf{exchangeable truncated representation}. In \cref{sec:generalconstruction} we will show that the exchangeable truncated representation can be approximated by a \textbf{finite iid representation}, which will be denoted $(\widetilde X_{n,1},\ldots,\widetilde X_{n,n})$.

 In the rest of this paper, we will assume that the mean measure $\rho$ is available and one can sample from the conditional distribution $\mu_w$ (or $H$ in the homogeneous case). Under these conditions, we can obtain the representations \eqref{eq:truncation} by first sampling $(W_1,\ldots,W_n)$ (or $(\overline W_{n,1},\ldots,\overline W_{n,n})$), then conditionally sample $(X_1,\ldots,X_n)$ (or $(\overline X_{n,1},\ldots,\overline X_{n,n})$) from $\mu_w$ (or $H$).

\subsection{Existing representations of CRMs}

\label{sec:inverselevy}
\label{sec:sizebiased}

\textbf{Inverse-L\'evy representation.} For any $x> 0$, let
\begin{equation}
\overline \rho (x)=\int_x^\infty \rho(dw)
\end{equation}
be the tail intensity of the size measure $\rho$, and denote by
$
\overline\rho^{-1}(y)=\inf\{x>0\mid \overline\rho(x)\leq y \}
$
its generalized inverse. The inverse L\'evy representation~\citep{Khintchine1937,Ferguson1972} is given by
\begin{align*}
W_i=\overline\rho^{-1}(\xi_i).
\end{align*}
In this case, the sizes are ordered $W_1\geq W_2\geq \ldots$ and it therefore leads to the best possible approximation in terms of the sizes. While this representation has been used in many applications~\citep{Wolpert1998a,Nieto-Barajas2004,Griffin2011,Barrios2013,Argiento2016,Arbel2017} its main limitation is that $\rho^{-1}$ is in general non-tractable. Two exceptions are the beta random measure, whose inverse L\'evy representation is given by Equation~\eqref{eq:betaseries}, and the stable random measure (corresponding to the measure \eqref{eq:LevyGGP} with $\sigma\in(0,1)$ and $\tau=0$) where the inverse L\'evy representation is given by
$
W_i=\left ( \frac{\xi_i\sigma\Gamma(1-\sigma)}{\alpha}\right )^{-1/\sigma}.
$
\vspace{0.1in}

\textbf{Size-biased representation.} The size-biased sequential and exchangeable representations $(W_1,\ldots,W_n)$ and $(\overline W_{n,1},\ldots,\overline W_{n,n})$, introduced by~\cite[Section 4]{Perman1992}, are given as follows\footnote{Note that this is different from what \cite{Campbell2019} call a size-biased representation.}.
Let $0<T_1\leq T_2\leq \ldots$ be defined as $T_i=\Psi^{-1}(\xi_i)$
where $\Psi^{-1}$ is the generalized inverse of the Laplace exponent
$\Psi(t)=\int_0^\infty (1-e^{-wt})\rho(dw)$ and $\mathbb P(W_i\in dw \mid T_i=t)=\frac{w e^{-wt}\rho(dw)}{\psi(t)}$ where
$\psi(t)=\Psi'(t)=\int_0^\infty we^{-wt}\rho(dw).$
Additionally, given $T_{n+1}=t_{n+1}$, we have $\overline W_{n,1},\ldots,\overline W_{n,n}$ are iid with distribution
$$
\mathbb P(\overline W_{n,i}\in dw \mid T_{n+1}=t_{n+1})=\frac{\left (1-e^{-wt_{n+1}}\right )\rho(dw)}{\Psi(t_{n+1})}.
$$
The term size-biased comes from the fact that the atoms are ordered by successively sampling without replacement according to their size $W$
$$
\Pr\left (0<T_1\leq T_2\leq T_3\leq \ldots \mid G \right )= \prod_{k=1}^{\infty}\frac{W_k}{\sum_{j\geq k} W_j}.
$$
In the case of the gamma random measure, which corresponds to Equation~\eqref{eq:LevyGGP} with $\sigma=0$, \cite{Perman1992} show that the series representation corresponds to \cite{Bondesson1982}'s representation and is given by
$
T_i=\Psi^{-1}(\xi_i)=\tau(e^{\xi_i/\alpha} -1)$ and $W_i\mid T_i=t_i\sim \gammadist(1,\tau+t_i)$.

\section{Series representations and finite approximations of CRMs}
\label{sec:generalconstruction}

\subsection{Arrival-time augmentation}
Let $\lambda_w(dt)$ be some Markov probability kernel from $(0,\infty)$ to $(0,\infty)$ with cdf $\Lambda_w(t)=\int_0^t \lambda_w(du)$ satisfying, for any $t>0$
\begin{align}
\int_0^\infty \Lambda_w(t)\rho(dw)<\infty, \quad \Lambda_{w_2}(t)\leq\Lambda_{w_1}(t)\text{ for all }0<w_2\leq w_1.\label{eq:condLambda}
\end{align}
That is, if $T_1\sim\lambda_{w_1}$ and $T_2\sim\lambda_{w_2}$, with $0<w_2\leq w_1$, then $T_2\succeq T_1$.

We consider a Poisson random measure $ N'$ on the augmented space $S'=(0,\infty)\times \Theta\times(0,\infty)$ with mean measure $\nu'(dw,d\theta,dt)=\rho(dw)\mu_w(d\theta)\lambda_w(dt)$. For a point $X'=(W,\theta,T)\in S'$, we refer to $T$ as the \textbf{arrival time} of the point $X'$. Indeed, the second condition in Equation~\eqref{eq:condLambda} ensures that points with larger size $W$ are more likely to have a smaller arrival time $T$. We may therefore consider the following analogy: atoms of the Poisson random measure are enrolled in a race, each atom having a strength $W$, and stronger atoms are more likely to finish faster and therefore have a smaller $T$. The first condition in Equation~\eqref{eq:condLambda} ensures that $N'(\mathbb R_+,\Theta,(0,t))<\infty$ for any $t>0$ hence we can order the arrival times. Let $0<T_1\leq T_2\leq \ldots$ denote the sequence of ordered arrival times, and consider the augmented sequential representation
$
N'=\sum_{i=1}^\infty \delta_{(W_i,\theta_i,T_i)}
$
where $X_i=(W_i,\theta_i)$, $i\geq 1$ are the associated sizes and locations. By the restriction theorem~\citep{Kingman1993}, $N=\sum_{i=1}^\infty \delta_{(W_i,\theta_i)}$ is a Poisson random measure with mean $\rho(dw)\mu_w(d\theta)$ and $W=\int_0^\infty wN(dw,d\theta)\sim\CRM(\rho,\mu_w)$. We now give the general definitions of the sequential, exchangeable and iid representations of the CRM associated to the arrival time kernel $\lambda_w$. For simplicity of presentation, we assume that for any $w$, $\lambda_w$ is absolutely continuous with respect to the Lebesgue measure with $\lambda_w(dt)=\lambda_w(t)dt$, but one can also consider discontinuous cdfs $\Lambda_w$, see \cref{sec:deterministicAT} for an example.

\subsection{Series and truncated exchangeable constructions}

\begin{thm}\label{thm:generalconstruction}
Let $\lambda_w$ be a parametric distribution on $(0,\infty)$ with parameter $w>0$ and $\Lambda_w$ be the associated parametric cumulative density function (cdf) satisfying condition~\eqref{eq:condLambda}.
Consider the conditional distributions
\begin{align}
\phi_t(dw)=\frac{\lambda_w(t)\rho(dw)}{\psi(t)}~~\text{ and }~~\varphi_t(dw) = \frac{\Lambda_w(t)\rho(dw)}{\Psi(t)}
\end{align}
where
$$
\Psi(t)=\int_0^\infty \Lambda_w(t)\rho(dw)~~\text{ and }~~\psi(t)=\int_0^\infty \lambda_w(t)\rho(dw).
$$
The \textbf{sequential construction} $G=\sum_{i=1}^\infty W_i\delta_{\theta_i}$ is obtained as follows, for $i\geq 1$
\begin{align}
T_i=\Psi^{-1}(\xi_i),~~~W_i\mid T_i=t_i \sim \phi_{t_i}~\text{ and }~\theta_i\mid W_i=w_i \sim \mu_{w_i}.\label{eq:seqconst}
\end{align}
The \textbf{truncated exchangeable construction} $G_n=\sum_{i=1}^n \overline W_{n,i}\delta_{\overline \theta_{n,i}}$  is obtained, for $i=1,\ldots,n$ by
\begin{align}
T_{n+1}&=\Psi^{-1}(\xi_{n+1}),~~\overline W_{n,i}\mid T_{n+1}=t_{n+1} \iidsim \varphi_{t_{n+1}}~~\text{ and }~~\overline\theta_{n,i}\mid \overline W_{n,i}=\overline w_{n,i} \sim \mu_{\overline w_{n,i}}.\label{eq:exchangconst}
\end{align}
\end{thm}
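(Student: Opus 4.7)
The plan is to derive both constructions as direct consequences of the structure of the augmented Poisson random measure $N'$ on $S'$, using only the restriction/marking theorem and two standard disintegration identities.

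First, I would project $N'$ onto its third coordinate. By the restriction theorem~\citep{Kingman1993}, this projection is a Poisson point process on $(0,\infty)$ with intensity $\psi(t)\,dt$. By Fubini applied to the nonnegative integrand $\lambda_w(s)\rho(dw)ds$, we have $\Psi(t)=\int_0^t \psi(s)\,ds$, so the first condition in~\eqref{eq:condLambda} guarantees $\Psi(t)<\infty$ for every $t$; the projected process is therefore locally finite, its points may be ordered as $0<T_1<T_2<\cdots$, and the classical time-change for Poisson processes gives $T_i=\Psi^{-1}(\xi_i)$, matching~\eqref{eq:seqconst}.

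Second, for the sequential construction, I would read off the conditional distribution of the marks from the mean measure. Viewing $t$ as location and $(w,\theta)$ as mark, the Palm/disintegration identity for Poisson random measures says that, given the ordered arrival-time sequence $(T_i)$, the marks $(W_i,\theta_i)$ are mutually independent with conditional law obtained by Bayes' rule on $\nu'(dw,d\theta,dt)=\rho(dw)\mu_w(d\theta)\lambda_w(t)\,dt$, namely
\begin{align*}
\Pr((W_i,\theta_i)\in dw\,d\theta\mid T_i=t)=\frac{\lambda_w(t)\rho(dw)\mu_w(d\theta)}{\psi(t)}=\phi_t(dw)\,\mu_w(d\theta),
\end{align*}
which is precisely the hierarchy in~\eqref{eq:seqconst}. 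A second application of the restriction theorem to the $(w,\theta)$-projection of $N'$ gives that $N=\sum_i\delta_{(W_i,\theta_i)}$ is a Poisson random measure on $S$ with mean $\rho(dw)\mu_w(d\theta)$, so $G=\sum_i W_i\delta_{\theta_i}\sim\CRM(\rho,\mu_w)$.

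Third, for the truncated exchangeable construction, I would condition on $T_{n+1}=t_{n+1}$. Conditional on this event, $N'$ restricted to $\{t<t_{n+1}\}$ has exactly $n$ points; the standard fact that a Poisson random measure, conditioned on its count in a set of finite mean-measure, consists of i.i.d.\ draws from the normalised restriction of the mean measure then yields that those $n$ points are i.i.d.\ from
\begin{align*}
\frac{\rho(dw)\mu_w(d\theta)\lambda_w(t)\1{0<t<t_{n+1}}\,dt}{\Psi(t_{n+1})}.
\end{align*}
Marginalising out $t$ over $(0,t_{n+1})$ turns $\lambda_w(t)\,dt$ into $\Lambda_w(t_{n+1})$ and produces the joint law $\varphi_{t_{n+1}}(dw)\,\mu_w(d\theta)$ for each $(\overline W_{n,i},\overline\theta_{n,i})$, exactly as in~\eqref{eq:exchangconst}. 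Since a uniform random permutation of an i.i.d.\ tuple is again i.i.d., the exchangeable order of the truncated sequence is preserved.

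The main obstacle is simply making the two conditioning arguments rigorous: the Palm disintegration of $N'$ at an arrival time in step two, and the "conditional on $T_{n+1}$, the first $n$ points are i.i.d.\ from the normalised restriction" statement in step three. Both are classical properties of Poisson random measures, and the first condition in~\eqref{eq:condLambda} is exactly what is needed to keep $\Psi(t)<\infty$ for every $t$ and to justify the Fubini swap that produces $\Lambda_w(t_{n+1})$ after marginalisation; the monotonicity condition on $\Lambda_w$ in $w$ plays no role in the distributional identities themselves and only matters for the stochastic ordering of the $W_i$'s discussed elsewhere.
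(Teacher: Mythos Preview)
Your proposal is correct and follows essentially the same approach as the paper's proof: both disintegrate the augmented mean measure $\nu'$ as $\psi(t)\,dt\times\phi_t(dw)\times\mu_w(d\theta)$ to obtain the marked Poisson structure for the sequential construction, and both derive the exchangeable construction by conditioning on $T_{n+1}$ and integrating out the auxiliary arrival times over $(0,t_{n+1})$. The only cosmetic difference is that the paper first states that the permuted $\overline T_{n,i}$ are i.i.d.\ with density $\psi(t)\1{0<t<t_{n+1}}/\Psi(t_{n+1})$ and then averages $\phi_{\overline T_{n,i}}$, whereas you work directly with the full triples and marginalise; the two are equivalent.
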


\subsection{Finite iid construction}
Note that ${\xi_{n+1}}/{n}$ tends to 1 almost surely as $n$ tends to infinity. This therefore suggests the following \textbf{finite iid construction}, as an approximation to the truncated measure $G_n$
\begin{align}
\widetilde G_n =\sum_{i=1}^n \widetilde W_{n,i} \delta_{\widetilde \theta_{n,i}}~\text{ where }~\widetilde W_{n,i} \iidsim \widetilde \varphi_{n}~~\text{ and }~~&\widetilde\theta_{n,i}\mid \widetilde W_{n,i}=\widetilde w_{n,i} \sim \mu_{\widetilde w_{n,i}},~i=1,\ldots,n,\label{eq:repiid}\\
\shortintertext{where}
\widetilde \varphi_n(dw)=\varphi_{\Psi^{-1}(n)}(dw) &= \frac{\Lambda_w(\Psi^{-1}(n))\rho(dw)}{n}.\label{eq:distiid}
\end{align}

\begin{prop}\label{prop:convergenceiid}
Let $\widetilde G_n$ be the finite iid approximation defined by Equation~\eqref{eq:repiid}. Then $\widetilde G_n$ converges in distribution to $G\sim\CRM(\rho,\mu_w)$ as $n\rightarrow\infty$.
\end{prop}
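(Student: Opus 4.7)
The plan is to prove convergence in distribution by establishing pointwise convergence of Laplace functionals on the class of bounded nonnegative measurable functions $h\colon\Theta\to\mathbb{R}_+$; this class determines the distribution of a random measure on $\Theta$. The Laplace functional of the target $G\sim\CRM(\rho,\mu_w)$ is the standard Campbell formula
$$\mathbb{E}\bigl[\exp(-G(h))\bigr]=\exp\left(-\int_0^\infty\!\int_\Theta(1-e^{-wh(\theta)})\,\rho(dw)\,\mu_w(d\theta)\right)=:\exp(-I(h)),$$
with $I(h)<\infty$ because $(1-e^{-wh(\theta)})\leq(1-e^{-w\|h\|_\infty})$ and $\int_0^\infty(1-e^{-w\|h\|_\infty})\rho(dw)<\infty$ by condition~\eqref{eq:conditionLevy}.

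First I would use the iid structure of $\{(\widetilde W_{n,i},\widetilde\theta_{n,i})\}_{i=1}^n$, together with the fact that $\widetilde\varphi_n(dw)\mu_w(d\theta)$ is a probability measure and $\widetilde\varphi_n(dw)=\Lambda_w(\Psi^{-1}(n))\rho(dw)/n$, to rewrite
$$\mathbb{E}\bigl[\exp(-\widetilde G_n(h))\bigr]=\left(\int_0^\infty\!\int_\Theta e^{-wh(\theta)}\,\widetilde\varphi_n(dw)\,\mu_w(d\theta)\right)^{\!n}=\left(1-\frac{I_n(h)}{n}\right)^{\!n},$$
where $I_n(h):=\int_0^\infty\!\int_\Theta(1-e^{-wh(\theta)})\,\Lambda_w(\Psi^{-1}(n))\,\rho(dw)\,\mu_w(d\theta)$.

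Second, I would verify that $I_n(h)\to I(h)$. Two ingredients are needed: (a) $\Psi^{-1}(n)\to\infty$ as $n\to\infty$, and (b) $\Lambda_w(t)\to 1$ as $t\to\infty$ for every $w>0$, which is immediate since $\lambda_w$ is a probability distribution on $(0,\infty)$. For (a), $\Psi(t)=\int_0^\infty\Lambda_w(t)\rho(dw)$ is nondecreasing in $t$, and monotone convergence gives $\Psi(t)\uparrow\int_0^\infty\rho(dw)=\infty$ by the infinite-activity part of \eqref{eq:conditionLevy}; hence $\Psi^{-1}(n)\to\infty$. Combining (a) and (b), $\Lambda_w(\Psi^{-1}(n))\to 1$ pointwise in $w$, and dominated convergence with integrable dominant $(1-e^{-w\|h\|_\infty})\rho(dw)\mu_w(d\theta)$ yields $I_n(h)\to I(h)$. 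The elementary limit $(1-a_n/n)^n\to e^{-a}$ for $a_n\to a\in[0,\infty)$ then gives $\mathbb{E}[\exp(-\widetilde G_n(h))]\to\exp(-I(h))=\mathbb{E}[\exp(-G(h))]$, completing the proof.

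The only technical point requiring care is the justification of $\Psi^{-1}(n)\to\infty$, where one must invoke the infinite-activity hypothesis $\int\rho=\infty$ to rule out the degenerate case in which $\Psi$ is bounded. The remainder is a clean Poisson limit computation for the binomial point process $\sum_{i=1}^n\delta_{(\widetilde W_{n,i},\widetilde\theta_{n,i})}$, and the same argument applied to test functions $f(w,\theta)$ in place of $wh(\theta)$ would in fact upgrade the statement to vague convergence of the underlying point processes on $(0,\infty)\times\Theta$ to the Poisson random measure $N$ with mean $\rho(dw)\mu_w(d\theta)$.
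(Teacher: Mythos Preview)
Your proof is correct and follows essentially the same route as the paper's own argument: compute the Laplace functional of $\widetilde G_n$ via the iid structure, rewrite it as $(1-I_n(h)/n)^n$, show $I_n(h)\to I(h)$ by dominated convergence using $\Lambda_w(\Psi^{-1}(n))\uparrow 1$, and conclude with the elementary limit $(1-a_n/n)^n\to e^{-a}$. Your version is in fact slightly more careful, since you explicitly justify $\Psi^{-1}(n)\to\infty$ from the infinite-activity assumption and name an explicit integrable dominant, points the paper leaves implicit.
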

For the iid construction, one needs to evaluate $\Psi^{-1}(n)$ only once, and this can be done numerically if there is an analytic form for $\Psi$. Instead of the distribution $\widetilde\varphi_n=\varphi_{\Psi^{-1}(n)}$, we can alternatively use more general distributions  $\widetilde\varphi_n=\varphi_{f(n)}$ where $f$ is an increasing function such that $\Psi(f(n))\sim n\text{ as }n\to\infty$.
\cref{prop:convergenceiid} also holds as the proof can be straightforwardly adapted to this case. Note that if $\Psi(t)\sim c t^\sigma$ as $t$ tends to infinity for some constant $c$ and $\sigma>0$, then we can take $f(n)=(n/c)^{1/\sigma}$. \cref{prop:asymptoticPsiinv} gives examples of admissible functions $f$ under generic assumptions on $\rho$ and $\Lambda_w$.


\section{Examples}
\label{sec:examples}

We first show how the inverse L\'evy and size-biased constructions described in \cref{sec:inverselevy} can be recovered as special cases of the general construction introduced in \cref{sec:generalconstruction}. We then derive novel constructions within this framework.

\subsection{Deterministic arrival times (inverse-L\'evy construction)}
\label{sec:deterministicAT}

Assume that the arrival times are deterministic given the size $W=w$, and inversely proportional to it, that is
$\lambda_w(dt)=\delta_{1/w}(dt).$
The distribution does not admit a density with respect to the Lebesgue measure, but one can still obtain expressions for the different quantities of interest. We obtain
\begin{align*}
\Lambda_w(t)&=\1{t\geq 1/w},&\Psi(t)&=\overline \rho(1/t),&\Psi^{-1}(\xi)&=1/\overline \rho^{-1}(\xi),\\
\phi_t(dw)&=\delta_{1/t}(dw),&\varphi_t(dw)&= \frac{\rho(dw)\1{w>1/t}}{\overline\rho(1/t)},&\widetilde \varphi_n(dw)&= \frac{\rho(dw)\1{w>\overline\rho^{-1}(n)}}{n}.
\end{align*}
The sequential construction corresponds to the inverse-L\'evy construction described in \cref{sec:inverselevy}. The exchangeable representation is similar to the $\epsilon$-truncation of normalized CRMs, used in~\citep{Argiento2016,Argiento2016a}, except that the truncation threshold $\epsilon=1/T_{n+1}$ is treated as a random variable here.

\subsection{Exponential arrival times (size-biased construction)}
\label{sec:expoAT}

Consider an exponential arrival time distribution with $
\lambda_w(dt)=w e^{-wt}dt$ and $\Lambda_w(t) = 1 - e^{-wt}$. This leads to \cite{Perman1992}'s size-biased sequential and exchangeable representations described in \cref{sec:sizebiased}. While this construction is not novel, it appears that it provides a novel series representation for the generalized gamma random measure. We also show that the iid representation associated to this arrival time distribution corresponds to the finite approximation proposed by \cite{Lee2016}.

\paragraph{Generalized gamma process.} In the case of the size measure \eqref{eq:LevyGGP} with $\alpha > 0, \sigma \in (0,1)$ and $\tau \geq 0$, we obtain the following sequential construction for the GGP, which appears to be novel
\begin{equation}
W_i\mid \xi_i\sim \gammadist\left(1-\sigma,\left( \sigma\xi_i /\alpha + \tau^\sigma\right)^{\frac{1}{\sigma}}\right).
\end{equation}
In~\cref{sec:supp:examples:exponential}, we compare this representation with Rosinski's series representation for the GGP~\citep{Rosinski2001a,Rosinski2007}. 
The conditional distribution for the exchangeable and iid constructions is given by
\begin{equation}
\varphi_t(dw) = \frac{\sigma w^{-1-\sigma}e^{-\tau w} (1-e^{-t w})}{\Gamma(1-\sigma) ((t+\tau)^\sigma - \tau^\sigma))}.\label{eq:BFRY}
\end{equation}
The random variable having this density is called the exponentially-tilted BFRY distribution~\citep{Bertoin2006, Devroye2014, Lee2016},
and written as $\mathrm{etBFRY}(\sigma, t, \tau)$. One can easily simulate from Equation~\eqref{eq:BFRY}, see \cref{subsec:etbfry}. Note that $\Psi(t)\sim\alpha t^\sigma/\sigma$ as $t\to\infty$ hence we can consider the iid distribution $\widetilde\varphi_n(dw)=\varphi_{(n\sigma/\alpha)^{1/\sigma}}(dw)$. This corresponds precisely to the finite-dimensional approximation introduced by~\cite{Lee2016} for the GGP, which can therefore be seen as a particular case of our approach.

\subsection{Gamma arrival times}
\label{sec:gammaAT}

As a generalization of the exponential arrival times, consider now a gamma arrival distribution
$$
\lambda_w(dt)= \frac{t^{\kappa-1}e^{-\kappa w t}(\kappa w)^\kappa}{\Gamma(\kappa)}, \quad
\Lambda_w(t) = \frac{\gamma(\kappa, \kappa wt)}{\Gamma(\kappa)}.
$$
where $\kappa\geq 1$ is a tuning parameter and $\gamma(\kappa,t)=\int_0^t x^{\kappa-1}e^{-x}dx$ is the lower incomplete gamma function. Since $\bbE[T|w] \to 1/w$ and $\mathrm{Var}(T|w) \to 0$ as $\kappa \to \infty$, $T$ converges in probability hence in distribution to $1/w$, and therefore $\Lambda_w(t)\rightarrow \1{t\geq 1/w}$ as $\kappa$ tends to infinity, which corresponds to the arrival time cdf of the inverse L\'evy representation. Hence, the construction based on the gamma arrival times bridges between the size-biased ($\kappa=1$) and inverse-L\'evy ($\kappa\rightarrow\infty$) constructions.

\textbf{Generalized gamma process.}
Consider the generalized gamma process with size measure \eqref{eq:LevyGGP} and parameters $\alpha > 0$, $\sigma \in (0,1)$ and $\tau \geq 0$. We have
\eq{
\psi(t) = \eta \frac{t^{\kappa-1}}{(t + \tau/\kappa)^{\kappa-\sigma}}, \quad
\Psi(t) = \left \{
\begin{array}{ll}
  \eta\left (\frac{\tau}{\kappa}\right )^\sigma B_{\frac{\kappa t}{\kappa t + \tau}}(\kappa, -\sigma) & \text{if }\tau>0 \\
  \frac{\eta}{\sigma} t^\sigma & \text{if }\tau=0
\end{array}\right .
}
where $\eta=\frac{\alpha\kappa^\sigma\Gamma(\kappa-\sigma)}{\Gamma(\kappa)\Gamma(1-\sigma)}$. For the sequential and exchangeable constructions, we get
\eq{
\phi_t(dw) = \gammadist(w ; \kappa - \sigma, \kappa t + \tau)dw, \quad
\varphi_t(dw) \propto w^{-\sigma-1} e^{-\tau w}\gamma(\kappa, \kappa t w)dw.
}
For the iid construction, we can use \cref{eq:distiid} and estimate $\Psi^{-1}(n)$ numerically or, using \cref{prop:asymptoticPsiinv} and \cref{tab:kernels}, we can alternatively use
$\widetilde\varphi_n(dw)=\varphi_{(\sigma n/\eta)^{1/\sigma}}(dw)$.
The normalizing constant of $\varphi_t$ (and therefore $\widetilde\varphi_n$) has an analytic expression via standard functions. We call the random variable having distribution $\varphi_t$ a \emph{exponentially-tilted generalized BFRY} random variable, due to the form of the pdf obtained by exponentially tilting the pdf of generalized BFRY. This distribution has a number of remarkable properties that make it amenable for tractable simulation and posterior inference. Refer to \cref{subsec:etgbfry} for a more detailed description.


\subsection{Inverse gamma arrival times}
\label{sec:invgammaAT}

Consider now an inverse gamma arrival distribution $\lambda_w(dt)=\igammadist(t ; \kappa, \kappa/w)dt$
where $\igammadist(t; a, b)$ is the pdf of an inverse gamma random variable and $\kappa \geq 1$ is a tuning parameter. By a similar argument as for the gamma arrival times, we have $\Lambda_w(t)\rightarrow \1{t\geq 1/w}$ as $\kappa\to\infty$ hence it also admits the inverse L\'evy construction as a limiting case. The case $\kappa=1$ is of particular interest, as it leads to a tractable novel representation for the GGP (see \cref{sec:supp:examplesIG}), and provide a novel way of interpreting the classical iid approximation of the beta process.

\textbf{Beta process.} Consider the one-parameter beta process with size measure \eqref{eq:stableSBP} with $\sigma=0$ and $c=1$. The bijective transformation $u=-(\alpha\log(w))^{-1}$ gives the measure $\rho(du) = u^{-2} du$ on $(0,\infty)$. Note that $\rho(du)$ is not a L\'evy measure, but we can nonetheless use our construction as the tail L\'evy intensity is finite. Using the inverse gamma kernel with $\kappa=1$, we obtain $\Psi(t)=t$ and the iid distribution $\widetilde\phi_n(du)=\igammadist(u ; 1, 1/n)dt$. Applying the inverse transformation $\widetilde W_i =e^{-1/(\alpha\widetilde U_i)}$, we obtain $\widetilde W_i\sim \betadist(\alpha/n,1)$,
which corresponds to the classical iid approximation for the beta process, described in Equation~\eqref{eq:betaiid}. The iid construction for the beta process can alternatively be recovered using the arrival time distribution $\Lambda_w(t) = w^{\frac{\alpha}{t}}$ directly with \eqref{eq:stableSBP}, without change of variable.
%


\subsection{Generalized Pareto arrival time}

Consider the arrival time distribution $\lambda_w(dt) = \frac{cw}{(tw+1)^{c+1}}dt$ where $c>0$.

\textbf{Stable beta process.} Consider the stable beta process with measure \eqref{eq:stableSBP} with $\sigma>0$. We have $\Psi(t)=\frac{\alpha c}{\sigma} ((t+1)^\sigma - 1)$ and
\begin{align*}
\phi_t(dw)& = \frac{w^{-\sigma} (1-w)^{c+\sigma-1} (tw+1)^{-c-1}\1{0<w<1}dw}{B(1-\sigma, c+\sigma) (t+1)^{\sigma-1}}\\
\varphi_t(dw) &= \frac{\sigma w^{-1-\sigma}(1-w)^{c+\sigma-1}}{  cB(1-\sigma, c+\sigma) ((t+1)^\sigma -1)} \bigg(1 - \frac{1}{(tw+1)^c}\bigg) \1{0<w<1}dw.
\end{align*}
These distributions admit the same conjugacy properties as the beta distribution, and one can sample exactly from these distributions as detailed in \cref{sec:supp:examplesGP}.

\section{Truncation error analysis}
\label{sec:asymptotics}

\subsection{Error on functionals of the CRM}

For a measurable function $f:S\rightarrow\mathbb R_+$ such that $N(f)<\infty$ a.s., the error term associated with the truncation is defined as
\begin{equation}
R_n=N(f)-N_n(f)=\sum_{i>n} f(W_i,\theta_i)\label{eq:error}.
\end{equation}
Taking for example $f(w,\theta)=w$ corresponds to the $L_1$ error between the CRM $W$ and $W_n$.

\begin{prop}\label{prop:errormgf}
For $\xi \sim \gammadist(n+1, 1)$, $R_n$ has the following moment generating function
\[
\bbE [ e^{-\lambda R_n}] = \bbE_\xi\bigg[\exp\bigg( - \int_S(1-e^{-\lambda f(w,\theta)})(1-\Lambda_w(\Psi^{-1}(\xi)))\rho(dw)\mu_w(dw)\bigg)\bigg].
\]
%
\end{prop}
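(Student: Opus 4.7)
The plan is to express the truncation error as an integral of $f$ against the portion of the augmented Poisson random measure $N'$ lying above arrival time $T_{n+1}$, condition on $T_{n+1}$, and then invoke Campbell's Laplace functional formula for a Poisson random measure.

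Since the sequential construction orders atoms by arrival time, the indices $i>n$ are exactly those with $T_i>T_{n+1}$, so
\[
R_n=\int_{S'} f(w,\theta)\,\mathbf{1}_{u>T_{n+1}}\,N'(dw,d\theta,du).
\]
For any fixed $t>0$ the restrictions of $N'$ to the disjoint strata $(0,\infty)\times\Theta\times(0,t]$ and $(0,\infty)\times\Theta\times(t,\infty)$ are independent Poisson random measures, and the event $\{T_{n+1}\ge t\}$ is measurable with respect to the former. Treating $T_{n+1}$ as a stopping time for the filtration generated by $N'$ along the third coordinate and invoking the strong Markov property of Poisson random measures, conditional on $T_{n+1}=t$ the restriction of $N'$ to $\{u>t\}$ is a Poisson random measure with mean $\rho(dw)\mu_w(d\theta)\lambda_w(du)\mathbf{1}_{u>t}$. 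Its projection onto $S$ is therefore a Poisson random measure with mean $(1-\Lambda_w(t))\rho(dw)\mu_w(d\theta)$, and Campbell's theorem applied to this projection yields
\[
\bbE\bigl[e^{-\lambda R_n}\,\big|\,T_{n+1}=t\bigr]=\exp\bigg(-\int_S\bigl(1-e^{-\lambda f(w,\theta)}\bigr)(1-\Lambda_w(t))\rho(dw)\mu_w(d\theta)\bigg).
\]

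To finish, I use Theorem \ref{thm:generalconstruction} to write $T_{n+1}=\Psi^{-1}(\xi_{n+1})$, where $\xi_{n+1}$ is the $(n+1)$-th ordered point of a unit-rate Poisson process on $(0,\infty)$ and hence $\xi_{n+1}\sim\gammadist(n+1,1)$. Taking the outer expectation over $\xi_{n+1}$ produces exactly the claimed formula. The only step that needs genuine care is the conditioning argument: justifying that, given $T_{n+1}=t$ (a null event), the ``tail'' portion of $N'$ has the asserted Poisson law. This can be done either via the stopping-time/strong-Markov argument sketched above, or, if preferred, by computing the joint Laplace transform of $(T_{n+1},\sum_{i>n}f(W_i,\theta_i))$ directly using the density of $T_{n+1}$ together with a Palm-type identity for Poisson random measures and then disintegrating; the two routes give the same result.
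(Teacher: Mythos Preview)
Your proof is correct and follows essentially the same route as the paper: condition on $T_{n+1}$, identify the tail $\sum_{i>n}\delta_{X_i}$ as a Poisson random measure with mean $(1-\Lambda_w(t))\rho(dw)\mu_w(d\theta)$, apply Campbell's Laplace functional, and then use $T_{n+1}=\Psi^{-1}(\xi_{n+1})$ with $\xi_{n+1}\sim\gammadist(n+1,1)$. The only difference is cosmetic: the paper justifies the conditional Poisson law by a terse appeal to the marking theorem, whereas you give a more careful stopping-time/strong-Markov argument for the same fact.
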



We now consider results for the special case $f(w,\theta)=w$. The mean and variance of the truncation error $R_n$ given $T_{n+1}$ are given by
\begin{align*}
\mathbb E[R_n\mid T_{n+1}]=\int_0^\infty  w (1-\Lambda_w(T_{n+1}))\rho(dw),~~
\mathbb V[R_n\mid T_{n+1}]=\int_0^\infty  w^2 (1-\Lambda_w(T_{n+1}))\rho(dw).
\end{align*}
The next proposition provides an asymptotic expression for the error term, giving insights on how the error relates to the choice of the arrival time distribution $\lambda_w(t)$. The proposition makes some assumptions of regular variation on the mean measure $\rho$. Background on regular variation and Mellin transforms is given in \cref{sec:app:regularvariation} and the proof of \cref{prop:asymptoticerror} is given in \cref{sec:proofasympt}.
\begin{prop}\label{prop:asymptoticerror}
Assume that the mean measure $\rho(dw)$ is absolutely continuous with respect to the Lebesgue measure with density function $\rho(w)$ such that
\begin{align}
\label{eq:RVrho}
\rho(w)\sim \zeta_0 w^{-1-\sigma}~\text{ as }~w\rightarrow 0
\end{align}
where $\sigma\in(0,1)$ and $\zeta_0>0$.  Assume additionally that $\Lambda_{w}(t)=1-k(wt)$ where $k$ is a positive function on
$(0,\infty)$ such that its Mellin transform (see \cref{def:Mellin}) $\check{k}$ converges in some open interval containing $[\sigma-2,\sigma-1]$. Assume additionally that either (i) $k$ is differentiable with derivative $k'$ and that the Mellin transform $\check k'$ of $k'$ is defined in some open interval containing $\sigma-1$, or (ii) that $k(x)=\1{x\leq 1}$. Then we have
\begin{equation}
R_n \sim C_1(\sigma)\zeta_0^{1/\sigma}\sigma^{1-1/\sigma} n^{1-1/\sigma}~~\text{ almost surely as }n\to\infty
\label{eq:almostsureRn}
\end{equation}
where the constant $C_1(\sigma)$ is given by $C_1(\sigma)=(1-\sigma)^{-1}$ if $k(x)=\1{x\leq 1}$ and $C_1(\sigma)=\frac{\check{k}(\sigma-1)}{(-\check k'(\sigma-1))^{1-1/\sigma}}$ if $k$ is differentiable, and only depends on the arrival time distribution $\Lambda_w$ and $\sigma$.
\end{prop}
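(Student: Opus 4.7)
The idea is to condition on $T_{n+1}$ and exploit the Poisson process structure of the augmented representation. By the restriction and mapping theorems for $N'$, conditional on $T_{n+1}=t$, the sequence $(W_i,\theta_i)_{i>n}$ is distributed as the atoms of a Poisson random measure on $S$ with intensity $(1-\Lambda_w(t))\rho(dw)\mu_w(d\theta)=k(wt)\rho(dw)\mu_w(d\theta)$. Hence, by Campbell's formula, $R_n$ conditional on $T_{n+1}=t$ is a Poisson integral with conditional mean $m(t):=\int_0^\infty w\,k(wt)\rho(dw)$ and conditional variance $v(t):=\int_0^\infty w^2 k(wt)\rho(dw)$. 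The problem then splits into (i) obtaining the deterministic asymptotics of $m(t)$ and $\Psi(t)$ as $t\to\infty$, and (ii) upgrading the resulting conditional limit to an almost sure statement for $R_n$.

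For (i), the change of variable $u=wt$ combined with the local regular variation $\rho(w)\sim\zeta_0 w^{-1-\sigma}$ at $0$ formally yields
\[
m(t)=t^{-2}\!\int_0^\infty u\,k(u)\,\rho(u/t)\,du\sim\zeta_0\,t^{\sigma-1}\!\int_0^\infty u^{-\sigma}k(u)\,du=\zeta_0\,\check k(\sigma-1)\,t^{\sigma-1},
\]
and analogously $\Psi(t)\sim\zeta_0 t^\sigma\int_0^\infty(1-k(u))u^{-1-\sigma}du$. An integration by parts in the latter integral, whose boundary terms vanish under the Mellin-convergence hypothesis on $k'$, gives $\Psi(t)\sim-(\zeta_0/\sigma)\,\check k'(\sigma-1)\,t^\sigma$ and hence $\Psi^{-1}(n)\sim(\sigma n/(-\zeta_0\check k'(\sigma-1)))^{1/\sigma}$. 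Since $\xi_{n+1}/n\to 1$ almost surely by the strong law of large numbers for sums of iid exponentials, $T_{n+1}/\Psi^{-1}(n)\to 1$ almost surely, and substituting into $m(T_{n+1})$ gives exactly the claimed constant $C_1(\sigma)\,\zeta_0^{1/\sigma}\sigma^{1-1/\sigma}n^{1-1/\sigma}$.

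For (ii), the same Mellin argument applied to $v(t)$ yields $v(t)\sim c\,t^{\sigma-2}$, so $v(T_{n+1})/m(T_{n+1})^2=O(n^{-1})$. Using the conditional mgf formula of \cref{prop:errormgf} together with standard Bennett/Bernstein-type tail bounds for Poisson integrals, one obtains that $\bbP(|R_n/m(T_{n+1})-1|>\epsilon)$ decays exponentially in $n$, and Borel-Cantelli combined with the monotonicity $R_n\searrow$ upgrades this to almost sure convergence along every integer $n$. The boundary case $k(x)=\1{x\leq 1}$ falls outside the integration-by-parts step, but can be handled directly from the explicit representation $R_n=\sum_{i>n}\overline\rho^{-1}(\xi_i)$: Karamata's theorem applied to $\overline\rho(x)\sim(\zeta_0/\sigma)x^{-\sigma}$ gives $\overline\rho^{-1}(\xi)\sim(\zeta_0/(\sigma\xi))^{1/\sigma}$, and the tail sum $\sum_{i>n}\xi_i^{-1/\sigma}\sim\sigma n^{1-1/\sigma}/(1-\sigma)$ almost surely, recovering $C_1(\sigma)=1/(1-\sigma)$.

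I expect the main obstacle to be the rigorous justification of the Abel/Mellin step: the regular variation of $\rho$ is only local at $0$, while the integrals defining $m(t)$ and $\Psi(t)$ are global, so one must split at a cut-off depending on $t$, control the non-asymptotic range using the strip of Mellin convergence of $\check k$ (and of $\check k'$, for the integration-by-parts), and pass to the limit uniformly in $t$. The precise convergence conditions on $\check k$ in an open interval containing $[\sigma-2,\sigma-1]$ assumed in the statement are exactly what makes these boundary and cut-off error terms controllable.
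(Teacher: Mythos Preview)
Your plan follows the same overall architecture as the paper: condition on $T_{n+1}$, use Campbell's formula to reduce to the asymptotics of $m(t)=\int w\,k(wt)\rho(dw)$, $v(t)=\int w^2 k(wt)\rho(dw)$ and $\Psi(t)$, invert to get $T_{n+1}\sim c\,n^{1/\sigma}$, and then upgrade the conditional mean to an almost sure limit via Borel--Cantelli plus the monotonicity of $R_n$. The Abel/Mellin step you describe is exactly what the paper does, except that the paper invokes it as a black box (the generalised Abelian theorem, \citep[Thm.~4.1.6]{Bingham1989}, stated as \cref{th:generalizedAbel2}) rather than redoing the cut-off argument by hand; that theorem is precisely the rigorous justification of the ``change of variable $u=wt$ and pass to the limit'' heuristic you wrote down, and the open-interval Mellin hypotheses are its standing assumptions. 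One cosmetic difference: the paper performs the integration by parts \emph{before} the Abelian step, writing $\Psi(t)=-t\int_0^\infty k'(wt)\,\overline\rho(w)\,dw$ and then applying the Abelian theorem with kernel $k'$ against the regularly varying $\overline\rho$, rather than passing to the limit first and integrating by parts in $\int_0^\infty(1-k(u))u^{-1-\sigma}du$ afterwards. The two routes give the same constant, but the paper's ordering makes the role of the hypothesis on $\check k'$ more transparent.

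The one place where your plan and the paper genuinely diverge is the concentration step. You propose Bennett/Bernstein bounds to get exponential decay of $\Pr(|R_n/m(T_{n+1})-1|>\epsilon)$. This requires the two-sided conditional mgf $\mathbb E[e^{\lambda R_n}\mid T_{n+1}]$ to be finite for some $\lambda>0$, i.e.\ $\int_0^\infty(e^{\lambda w}-1)k(wT_{n+1})\rho(dw)<\infty$; that is not implied by the stated hypotheses (which control only Mellin transforms near $[\sigma-2,\sigma-1]$, not exponential tails of $k(\cdot\,t)\rho$), and \cref{prop:errormgf} only gives the one-sided transform $\mathbb E[e^{-\lambda R_n}]$. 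The paper instead uses Chebyshev, which needs only the second moment $v(T_{n+1})$ already computed, yielding $\Pr(\cdot\mid T_{n+1})\lesssim n^{-1}$; this is not summable, so the paper passes to the subsequence $a_n=n^2$, applies Borel--Cantelli there, and sandwiches using $R_{(m+1)^2}\le R_n\le R_{m^2}$ together with $\mathbb E[R_{m^2}\mid T_{m^2+1}]/\mathbb E[R_{(m+1)^2}\mid T_{(m+1)^2+1}]\to 1$. Your mention of monotonicity suggests you may have had this sandwiching in mind anyway; if so, just replace Bernstein by Chebyshev and the argument goes through. Your separate treatment of the indicator case $k=\1{x\le 1}$ via the explicit inverse-L\'evy tail sum is correct and arguably more direct than the paper, which keeps that case inside the same framework by observing $\Psi(t)=\overline\rho(1/t)$ directly.
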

The deterministic, gamma, inverse-gamma (for $\kappa>2-\sigma$) and generalized Pareto (for $c>2-\sigma$) arrival time distributions discussed in \cref{sec:examples} all verify the assumptions of \cref{prop:errormgf}. The associated kernels, Mellin transforms and constants $C_1(\sigma)$ are given in \cref{tab:kernels} in the appendix. Figure \ref{fig:simul}(a) shows the value of the constant $C_1(\sigma)$ for the deterministic and gamma arrival time, with different values of $\kappa$. As indicated in \cref{sec:gammaAT}, the approximation gets closer to the deterministic/inverse L\'evy construction as $\kappa$ increases. Both the GGP and the SBP with $\sigma>0$ verify Equation~\eqref{eq:RVrho}, with $\zeta_0=\frac{\alpha}{\Gamma(1-\sigma)}$ for the GGP and $\zeta_0=\frac{\alpha}{B(1-\sigma,c+\sigma)}$ for the SBP.
We run a simulation study in order to investigate the finite-$n$ properties of the proposed approximations. We report in Figure~\ref{fig:simul}(b-c) the mean and variance of $R_n$ for gamma arrival times, for the stable process and the GGP with $\sigma=0.4$. For the stable process, we also compare to the inverse-L\'evy approximation, as it has an analytic form. As expected, the approximation gets better as the value $\kappa$ increases. Additional simulations for other arrival time distributions are given in~\cref{sec:additional_simulations}.

\begin{figure}
\centering
\subfigure{\includegraphics[width=0.32\linewidth]{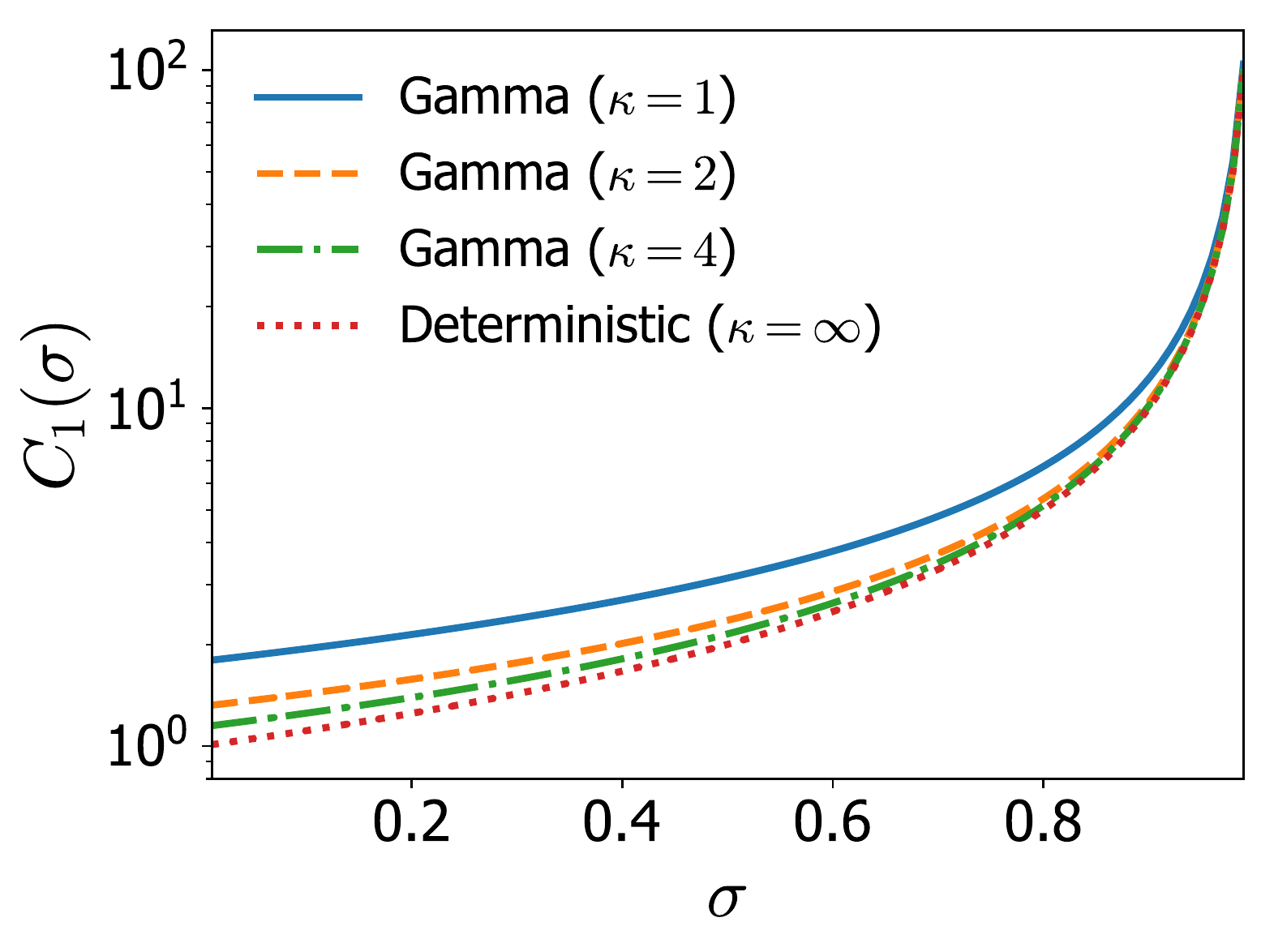}}
\subfigure{\includegraphics[width=0.32\linewidth]{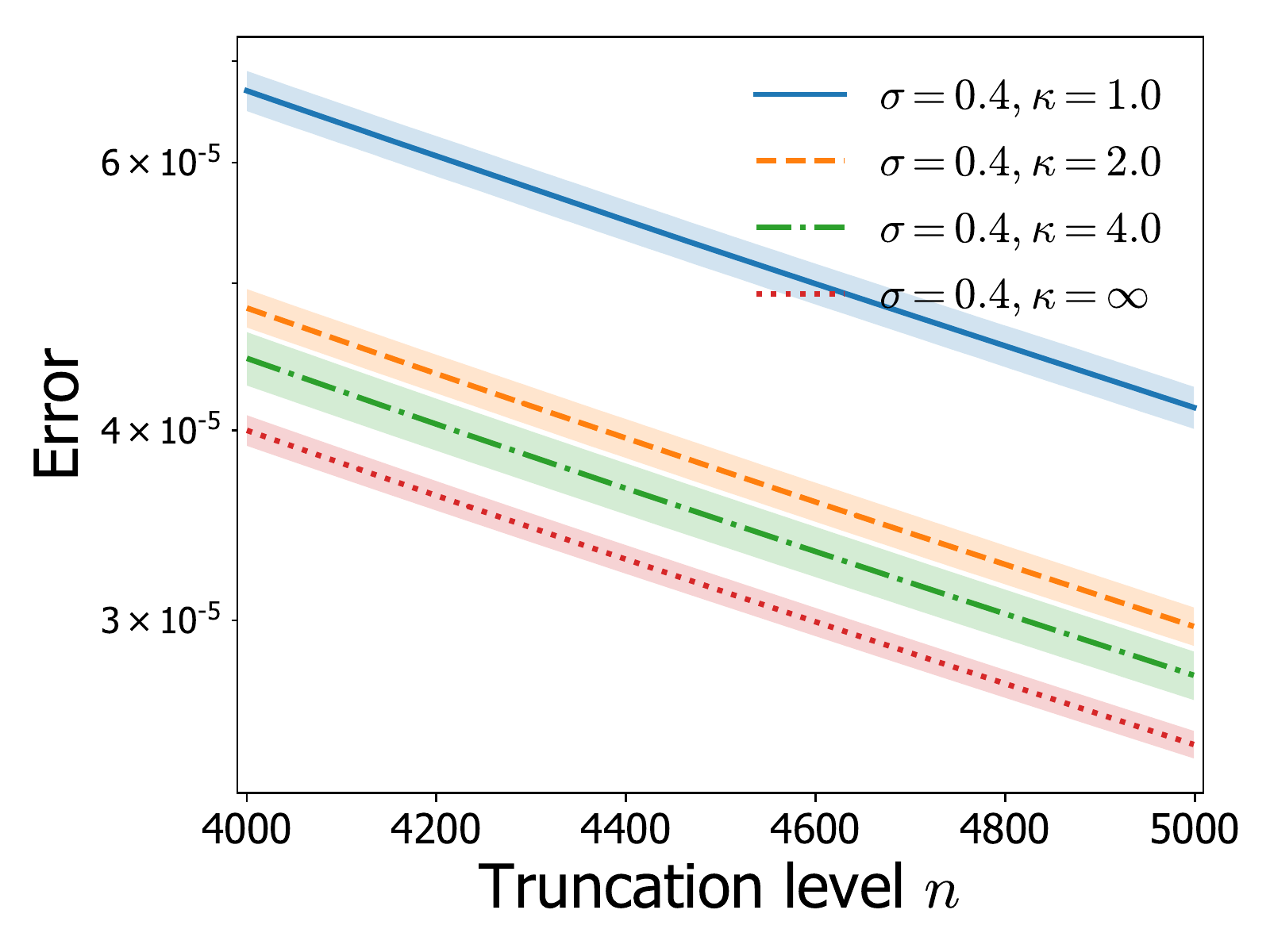}}
\subfigure{\includegraphics[width=0.32\linewidth]{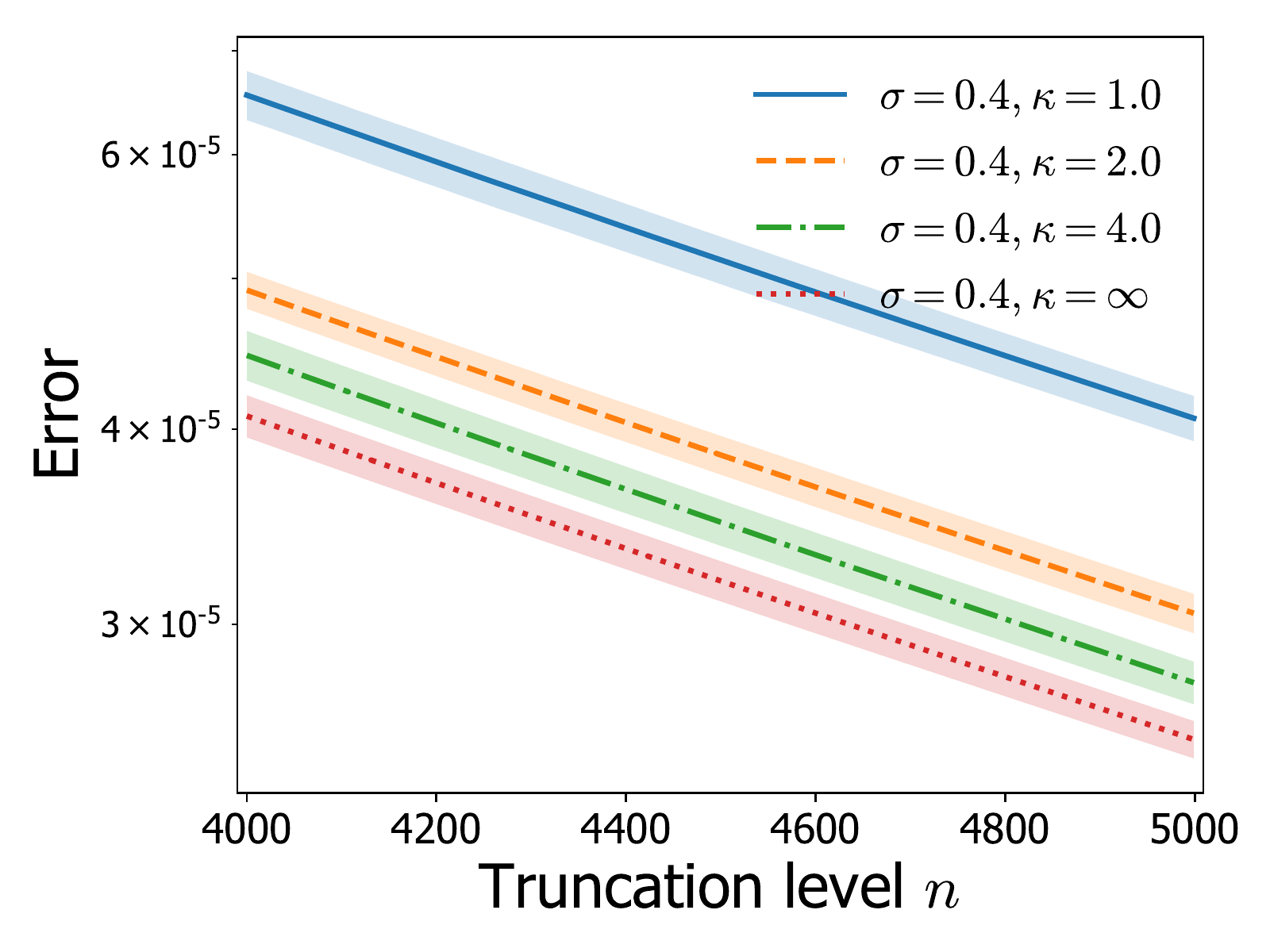}}
\caption{(Left) Constant $C_1(\sigma)$ for deterministic and gamma arrival times;  (Middle-Right) Simulated error $R_n$ with gamma arrival times for (Middle) stable process  and (Right) GGP.}
\label{fig:simul}
\end{figure}

\subsection{$L_1$ error on the marginal likelihood}
In this section we discuss the $L_1$ error on the marginal likelihood when truncated CRMs are used for hierarchical Bayesian models under the framework
described in \cite{Campbell2018}. Let $W \sim \crm(\rho, \mu_w)$, and $W_n$ be its approximation with $n$ atoms. Let $H(\cdot | w)$ be a probability
distribution on $\bbN \cup \{0\}$ for all $w$, and denote $\pi(w) := H(0|w)$. Consider a hierarchical Bayesian model for $m$ observations $X_{1:m} := \{X_j\}_{j=1}^m$,
$
Z_j | w_j \sim H(w_j), \quad X_j | z_j \sim F(z_j), \quad j=1, \dots, m,
$
and denote $p_{m, \infty}(X_{1:m})$ the marginal likelihood for this model. Similarly, denote $p_{m, n}(X_{1:m})$ be the marginal likelihood of the model
with the same generative process, except for $W_n$ instead of $W$. Following \cite{Campbell2018}, we analyze the quality of approximation by comparing $p_{m,\infty}(X_{1:m})$ and $p_{m,n}(X_{1:m})$. For the inverse L\'evy case, one recovers the bound derived in \cite[Theorem D.3.]{Campbell2018}.

\begin{prop}\label{prop:protobound}
We have the bound
$0\leq \frac{1}{2}||p_{m,\infty}(X_{1:m})-p_{m,n}(X_{1:m})||\leq 1-e^{-B_{m,n}}$,
where $B_{m,n}\leq m \int_0^\infty \int_0^\infty  (1- \pi(w))(1-\Lambda_w(\Psi^{-1}(\xi))\rho(dw)   \gammadist(\xi;n+1,1)d\xi$.
\end{prop}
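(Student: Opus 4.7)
The plan is to follow the coupling argument of \cite{Campbell2018}, adapted to the augmented Poisson random measure framework of \cref{sec:generalconstruction}. Recall that for two distributions $P,Q$ on a common space one has $\tfrac{1}{2}\|P-Q\|\leq \Pr(X\neq Y)$ for any coupling $(X,Y)$ with marginals $(P,Q)$. First, I would construct a coupling of the full and truncated models using a single Poisson random measure $N'$ on $S'$: the CRM $W\sim\CRM(\rho,\mu_w)$ is built from all atoms of $N'$ projected onto $S$, while $W_n$ is built from the $n$ atoms of smallest arrival time. Conditionally on $T_{n+1}=t$, the set of \emph{missing} atoms $\{(W_i,\theta_i)\}_{i>n}$ forms, by the restriction and mapping theorems~\citep{Kingman1993}, a Poisson random measure on $S$ with intensity $(1-\Lambda_w(t))\rho(dw)\mu_w(d\theta)$.

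Next, I would feed both CRMs through the same generative process, reusing the same randomness at shared atoms. Since $Z_j$ is a count measure across the atoms and $X_j$ depends on $Z_j$, the observations agree, $X^\infty_{1:m}=X^n_{1:m}$, as long as each $Z_j$ assigns zero count to every missing atom, i.e.\ on the event $\bigcap_{j\le m,\, i>n}\{Z_j(W_i,\theta_i)=0\}$. Denoting the complement by $A$, the coupling bound gives
\[
\tfrac{1}{2}\|p_{m,\infty}(X_{1:m})-p_{m,n}(X_{1:m})\|\leq \Pr(A) = 1-\mathbb E\Big[\prod_{i>n}\pi(W_i)^m\Big].
\]
Conditioning on $T_{n+1}$ and applying Campbell's formula for the Laplace functional of a Poisson process (noting that $\mu_w$ integrates out since $\pi$ depends only on $w$) yields
\[
\mathbb E\Big[\prod_{i>n}\pi(W_i)^m\;\Big|\; T_{n+1}=t\Big] = \exp\Big(-\int_0^\infty (1-\pi(w)^m)(1-\Lambda_w(t))\rho(dw)\Big).
\]

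Finally, defining $B_{m,n}\defas -\log\mathbb E[\prod_{i>n}\pi(W_i)^m]$ immediately gives the stated bound $\tfrac{1}{2}\|p_{m,\infty}-p_{m,n}\|\leq 1-e^{-B_{m,n}}$. To derive the explicit upper bound on $B_{m,n}$, I would combine two elementary ingredients: (i) the pointwise inequality $1-\pi(w)^m\leq m(1-\pi(w))$, which upper-bounds the integrand above and therefore lower-bounds the conditional expectation; and (ii) Jensen's inequality $\mathbb E[e^{-X}]\geq e^{-\mathbb E[X]}$, applied to the remaining expectation over $T_{n+1}=\Psi^{-1}(\xi_{n+1})$ with $\xi_{n+1}\sim\gammadist(n+1,1)$. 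Together, these yield $B_{m,n}\leq m \int_0^\infty\int_0^\infty (1-\pi(w))(1-\Lambda_w(\Psi^{-1}(\xi)))\rho(dw)\gammadist(\xi;n+1,1)d\xi$ as stated.

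The main obstacle I expect is the careful verification that the coupling produces equal observations on the event that all $Z_j$ vanish on missing atoms: this requires that the data-generating mechanism $X_j\mid Z_j$ depends only on the support of $Z_j$, a property built into the hierarchical trait/feature setup of \cite{Campbell2018} but worth spelling out. Once this is in place, the remaining arguments (the Laplace functional computation, the inequality $1-\pi^m\leq m(1-\pi)$, and Jensen) are routine.
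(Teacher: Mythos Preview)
Your proposal is correct and follows essentially the same route as the paper: invoke the \cite{Campbell2018} protobound/coupling to reduce to $1-\mathbb E\big[\prod_{i>n}\pi(W_i)^m\big]$, compute the conditional Laplace functional given $T_{n+1}$ via Campbell's theorem for the Poisson process of missing atoms, and apply Jensen's inequality over $\xi_{n+1}\sim\gammadist(n+1,1)$. In fact you are slightly more complete than the paper's own proof, which stops at the Jensen step and does not explicitly record the elementary inequality $1-\pi(w)^m\leq m(1-\pi(w))$ needed to pass to the stated form of the bound on $B_{m,n}$.
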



\section{Discussion}
\label{sec:discussion}
Our series construction can be seen as a special case of Rosinski's shot-noise series representation~\cite{Rosinski2001} (as is the case for most series constructions, see \cite{Rosinski2001}), using the disintegration $\rho(dw)=\int_0^\infty \nu(\xi,dw) d\xi$ where $\nu(\xi,dw)=\lambda_w(\Psi^{-1}(\xi))\rho(dw)/\psi(\Psi^{-1}(\xi))$ is a Markov kernel (noting that $\int_0^\infty \lambda_w(\Psi^{-1}(\xi))/\psi(\Psi^{-1}(\xi)))d\xi=1$). \citep{Huggins2017} proposed alternative ways of deriving iid approximations for some classes of CRMs. The approach does not rely on a latent Poisson construction and is therefore different from the approach considered here. We emphasize that the finite iid construction is useful for both simulation and hierarchical Bayesian modeling in various contexts. Using \cref{prop:asymptoticPsiinv}, one can approximate infinite-dimensional priors with finite-dimensional iid distributions without any numerical inversion. See \cref{sec:nggp_mixture} where we discuss an example of our construction applied to normalized GGP mixture models.

\newpage
\bibliographystyle{alpha}
\bibliography{paper}

\appendix
\newpage
\begin{appendices}
\section{Background on regular variation and Mellin transforms}
\label{sec:app:regularvariation}

This background material comes from the book of \cite{Bingham1989}.

\subsection{Definitions}
\begin{defn}[Slowly varying function]
A function $\ell: (0,\infty) \to (0, \infty)$ is \emph{slowly varying} at infinity if for all $c > 0$,
\begin{equation}
\frac{\ell(cx)}{\ell(x)} \to 1 \textrm{ as } x \to \infty.
\end{equation}
\end{defn}

\begin{defn}[Regularly varying function]
A function $f: (0,\infty) \to (0,\infty)$ is \emph{regularly varying} at infinity with exponent $\rho \in \bbR$ if $f(x) = x^\rho\ell(x)$ for some slowly varying $\ell$.
A function $f$ is regularly varying at 0 if $f(1/x)$ is regularly varying at infinity, i.e., $f(x) = x^{-\rho}\ell(1/x)$ for some $\rho\in\bbR$ and slowly varying $\ell$.
\end{defn}

\subsection{Basic theorems for regularly varying functions}
Let $U$ be a regularly function with exponent $\rho$ and slowly varying function $\ell$ locally bounded on $(0,\infty)$.

\begin{thm}
[Karamata's theorem] \citep[Propositions 1.5.8 and 1.5.10]{Bingham1989}.
Suppose that $U(t) \sim t^\rho \ell(t)$ as $t \to\infty$.
\begin{itemize}
\item When $\rho > -1$,
$$
\int_{0}^{x} U(t)dt\sim \frac{1}{\rho +1}x^{\rho +1}\ell(x) \textrm{ as } x \to \infty.
$$
\item When $\rho < -1$,
$$
\int_{x}^{\infty}U(t)dt\sim -\frac{1}{\rho +1} x^{\rho+1}\ell(x) \textrm{ as } x \to \infty.
$$
\end{itemize}
\label{th:Karamata}
\end{thm}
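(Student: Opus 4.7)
The plan is to reduce each of the two cases to a dominated convergence argument after a scaling substitution, with Potter's bounds on $\ell$ supplying the required integrable majorant. A preliminary reduction handles the $\sim$ in the hypothesis: writing $U(t)=t^\rho\ell(t)(1+\epsilon(t))$ with $\epsilon(t)\to 0$, it suffices to prove the asymptotic for the exact expression $t^\rho\ell(t)$, since the $\epsilon$-correction contributes a term of order $\eta \cdot x^{\rho+1}\ell(x)/|\rho+1|$ for arbitrarily small $\eta>0$. As prerequisites I would invoke two standard consequences of slow variation of $\ell$: the uniform convergence theorem ($\ell(\lambda x)/\ell(x)\to 1$ uniformly for $\lambda$ in compact subsets of $(0,\infty)$) and Potter's bounds (for every $\delta>0$ there exist $C,X>0$ such that $\ell(\lambda x)/\ell(x)\leq C\max(\lambda^{\delta},\lambda^{-\delta})$ whenever $x,\lambda x\geq X$). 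Local boundedness of $\ell$, assumed in the theorem, handles the remaining region where Potter's bound is unavailable.

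For $\rho>-1$, I would change variables $t=xs$ in $\int_0^x t^\rho\ell(t)\,dt$ and divide by $x^{\rho+1}\ell(x)$ to obtain $\int_0^1 s^\rho\, \ell(xs)/\ell(x)\,ds$, which is the quantity I want to send to $1/(\rho+1)=\int_0^1 s^\rho\,ds$. Pointwise, the integrand converges to $s^\rho$ for every fixed $s\in(0,1]$. To invoke dominated convergence I would split the integration domain at $s=A/x$: on $[A/x,1]$, Potter's bound with $\delta<\rho+1$ gives $\ell(xs)/\ell(x)\leq C s^{-\delta}$, so the integrand is bounded by the integrable function $C s^{\rho-\delta}$; on $[0,A/x]$, local boundedness $\ell(xs)\leq M$ produces a contribution of order $MA^{\rho+1}/[(\rho+1)\,x^{\rho+1}\ell(x)]$, which vanishes because $x^{\rho+1}\ell(x)\to\infty$ when $\rho>-1$.

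For $\rho<-1$, I would change variables $t=xs$ in $\int_x^\infty t^\rho\ell(t)\,dt$, divide by $x^{\rho+1}\ell(x)$, and obtain $\int_1^\infty s^\rho\, \ell(xs)/\ell(x)\,ds$, with target limit $-1/(\rho+1)=\int_1^\infty s^\rho\,ds$. Now Potter's bound applies throughout (since $s\geq 1$ implies $xs\geq x$), yielding $\ell(xs)/\ell(x)\leq C s^{\delta}$ for any $\delta>0$; choosing $\delta<-(\rho+1)$ makes $C s^{\rho+\delta}$ an integrable majorant on $[1,\infty)$, and dominated convergence closes the argument. The main obstacle is the splitting argument in the first case: Potter's bound breaks down where $xs$ is small, forcing us to trade the power-law contribution on $[0,A/x]$ against the divergence $x^{\rho+1}\ell(x)\to\infty$. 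The second case is cleaner because the rescaled domain $[1,\infty)$ automatically sits inside the regime where Potter's bound holds.
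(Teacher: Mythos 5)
Your proof is correct, but note that the paper does not actually prove this statement: it is quoted verbatim as background material with a citation to \citep[Propositions 1.5.8 and 1.5.10]{Bingham1989}, so there is no in-paper argument to compare against. What you have written is the standard textbook proof of the direct half of Karamata's theorem: rescale by $t=xs$, use the pointwise limit $\ell(xs)/\ell(x)\to 1$ from slow variation, and justify dominated convergence with Potter's bounds, splitting off the region near the origin in the $\rho>-1$ case where Potter's bound is unavailable and trading the bounded contribution there against the divergence $x^{\rho+1}\ell(x)\to\infty$. The $\rho<-1$ case is handled cleanly since the rescaled domain $[1,\infty)$ lies entirely in the Potter regime with $\delta<-(\rho+1)$. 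Your preliminary reduction from $U(t)\sim t^{\rho}\ell(t)$ to the exact product $t^{\rho}\ell(t)$ is also sound. The only point worth flagging is a standard quibble with the statement itself rather than with your argument: for $\int_0^x U(t)\,dt$ to be finite one needs $U$ (equivalently $\ell$) to be integrable, e.g.\ bounded, on a neighbourhood of $0$, which is slightly more than the ``locally bounded on $(0,\infty)$'' hypothesis recorded in the paper; the usual fix is to assume local integrability on $[0,\infty)$ or to replace $\int_0^x$ by $\int_X^x$ for a fixed $X>0$, which does not affect the asymptotics. With that caveat made explicit, your argument is complete.
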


\begin{cor}
This also holds when $U$ is regularly varying at 0. When $\rho < -1$ and $U(s) \sim s^\rho \ell(1/s)$ as $s \to 0$,
$$
\int_{x}^\infty U(s)ds \sim \frac{-1}{\rho+1} x^{\rho+1} \ell(1/x) \textrm{ as } x \to 0.
$$
\label{th:Karamata2}
\end{cor}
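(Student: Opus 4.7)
The plan is to deduce the corollary from \cref{th:Karamata} by the change of variable $s=1/t$, which sends the region $(x,\infty)$ of integration to the bounded region $(0,1/x)$ and converts ``regularly varying at $0$'' into ``regularly varying at infinity.'' Concretely, I would write
$$
\int_x^\infty U(s)\,ds \;=\; \int_0^{1/x} U(1/t)\,t^{-2}\,dt \;=\; \int_0^{1/x} V(t)\,dt,
$$
where $V(t):=U(1/t)\,t^{-2}$. From the hypothesis $U(s)\sim s^{\rho}\ell(1/s)$ as $s\to 0$ one reads off $U(1/t)\sim t^{-\rho}\ell(t)$ as $t\to\infty$, and hence $V(t)\sim t^{-\rho-2}\ell(t)$ as $t\to\infty$. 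In particular $V$ is regularly varying at infinity with exponent $-\rho-2$.

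Next, since by assumption $\rho<-1$, the exponent of $V$ satisfies $-\rho-2>-1$, placing us exactly in the first regime of \cref{th:Karamata}. Applying that theorem to $V$ on $(0,y)$ gives
$$
\int_0^{y} V(t)\,dt \;\sim\; \frac{1}{-\rho-2+1}\,y^{-\rho-1}\,\ell(y) \;=\; \frac{-1}{\rho+1}\,y^{-\rho-1}\,\ell(y) \quad\text{as } y\to\infty.
$$
Specializing to $y=1/x$ (so $y\to\infty$ as $x\to 0$) and substituting back into the identity above yields
$$
\int_x^\infty U(s)\,ds \;\sim\; \frac{-1}{\rho+1}\,x^{\rho+1}\,\ell(1/x) \quad\text{as } x\to 0,
$$
which is the claimed asymptotic.

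I do not anticipate a serious obstacle: the change of variable is elementary, and the hypothesis $\rho<-1$ is precisely what is needed both for convergence of $\int_x^\infty U$ (via the transformed integral $\int_0^{1/x} V$) and for applicability of the first bullet of \cref{th:Karamata}. The only minor care points are (i) to keep track of the sign conventions in the definition of regular variation at $0$ versus the asymptotic form used in the corollary, and (ii) to note that the local integrability of $V$ near $0$ is equivalent to tail integrability of $U$ at infinity, which is implicit in the statement (otherwise $\int_x^\infty U$ would be infinite and the claim vacuous). Once these are observed, the argument is a one-line change of variable plus one invocation of Karamata.
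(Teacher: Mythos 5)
Your argument is correct and is precisely the intended derivation: the paper states this corollary without proof, treating it as an immediate consequence of \cref{th:Karamata}, and your change of variable $s=1/t$ reducing the claim to the $\rho>-1$ case is the same inversion trick the paper itself spells out when proving the analogous \cref{th:generalizedAbel2}. Your two care points (sign/exponent bookkeeping and tail integrability of $U$, without which the statement is vacuous) are exactly the right ones, so nothing further is needed.
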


\subsection{Generalized Abelian theorem}

\begin{defn}\label{def:Mellin}
Given a measurable kernel $k:(0,\infty)\rightarrow\infty$ let
\[
\check{k}(z)=\int_{0}^{\infty}t^{-z-1}k(t)dt=\int_0^\infty u^{z-1}k(1/u)du
\]
be its Mellin transform, for $z\in\mathbb{C}$ such that the integral converges.
\end{defn}

\begin{rem}
  If $k(x)$ has a Mellin transform $\check k(z)$ which converges in $(z_1,z_2)$, then $h(x)=k(1/x)$ has a Mellin transform $\check h(z)=k(-z)$ which converges in $(-z_2,-z_1)$.
\end{rem}


\begin{thm}\citep[Theorem 4.1.6 page 201]{Bingham1989}\label{th:generalizedAbel1} Let the
Mellin transform $\check{k}$ of \thinspace$k$ converge at least in the strip
$\sigma\leq\operatorname{Re}(z)\leq\tau$, where $-\infty<\sigma<\tau<\infty$.
Let $\rho\in(\sigma,\tau)$, $\ell$ a slowly varying function, $c\in
\mathbb{R}.$ If $f$ is measurable, $f(x)/x^{\sigma}$ is bounded on every
interval $(0,a]$ and%
\[
f(x)\sim cx^{\rho}\ell(x)\text{ as }x\rightarrow\infty
\]
then
\[
\int_{0}^{\infty}k(x/t)f(t)t^{-1}dt\sim c\check{k}(\rho)x^{\rho}\ell(x)\text{
as }x\rightarrow\infty
\]

\end{thm}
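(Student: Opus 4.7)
The plan is to perform the change of variables $u = t/x$, which rewrites the integral as
\[
I(x) := \int_0^\infty k(x/t) f(t) t^{-1} dt = \int_0^\infty k(1/u)\, f(xu)\, u^{-1}\, du,
\]
and then normalize by $x^\rho \ell(x)$ and pass to the limit $x \to \infty$ inside the integral. For each fixed $u > 0$, using $f(xu) \sim c (xu)^\rho \ell(xu)$ together with $\ell(xu)/\ell(x) \to 1$ (slow variation of $\ell$), the integrand converges pointwise:
\[
\frac{k(1/u)\, f(xu)}{x^\rho \ell(x)\, u} \xrightarrow[x\to\infty]{} c\, k(1/u)\, u^{\rho - 1}.
\]
If the limit passes through, the value is $c \int_0^\infty k(1/u) u^{\rho-1} du = c \check k(\rho)$, which is exactly what the theorem claims. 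The entire proof reduces to justifying this interchange.

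To do so, I would split $(0,\infty)$ into a compact middle $[\delta, M]$ and two tails, and handle each piece separately. On $[\delta, M]$, the uniform convergence theorem for regularly varying functions (Theorem 1.5.2 in Bingham et al.) upgrades the pointwise convergence $f(xu)/(x^\rho \ell(x)) \to c u^\rho$ to uniform convergence in $u$; together with the fact that $k(1/u) u^{\rho-1}$ is integrable on $[\delta,M]$ (a consequence of $\check k(\rho)<\infty$), this gives convergence of the middle contribution to $c \int_\delta^M k(1/u) u^{\rho-1} du$. For the large-$u$ tail, Potter's bound (Theorem 1.5.6 in Bingham et al.) supplies, for any $\varepsilon>0$ with $\rho+\varepsilon<\tau$, a constant $K$ and an $X_0$ such that $f(xu)/(x^\rho \ell(x)) \leq K u^{\rho+\varepsilon}$ for $x \geq X_0$ and $u \geq 1$, so the large-$u$ contribution is dominated by $K \int_M^\infty k(1/u) u^{\rho+\varepsilon-1} du$, which is finite because $\check k(\rho+\varepsilon)<\infty$ and tends to $0$ as $M \to \infty$.

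The small-$u$ tail, which will be the main technical obstacle, must be split further according to whether $xu \leq a$ or $xu > a$. In the sub-region $xu \leq a$ the hypothesis that $f(t)/t^\sigma$ is bounded on $(0,a]$ gives $f(xu) \leq C (xu)^\sigma$, so the contribution is bounded by
\[
C x^{\sigma-\rho}\ell(x)^{-1} \int_0^\delta k(1/u) u^{\sigma-1} du,
\]
which vanishes as $x\to\infty$ because $\sigma < \rho$, $\ell$ is slowly varying, and the integral is bounded by $\check k(\sigma)<\infty$. In the sub-region $xu > a$ with $u < \delta$, the other side of Potter's bound yields $f(xu)/(x^\rho \ell(x)) \leq K' u^{\rho-\varepsilon}$ for any $\varepsilon>0$ with $\sigma<\rho-\varepsilon$, so this piece is dominated by $K' \int_0^\delta k(1/u) u^{\rho-\varepsilon-1} du$, which is finite (convergence of $\check k$ at $\rho-\varepsilon$) and tends to $0$ as $\delta \downarrow 0$. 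Combining the three pieces and letting first $x\to\infty$, then $\delta\downarrow 0$ and $M\to\infty$, yields the stated asymptotic. The hard part is this small-$u$ analysis: because $f$ is only assumed regularly varying at infinity, no uniform growth bound on $f$ is available as $xu\to 0$, and one must stitch together the explicit Hölder-type hypothesis $f(t)/t^\sigma$ bounded near $0$ on one sub-interval with Potter's two-sided bound on the complementary sub-interval, which is why the boundary value $\sigma$ of the Mellin strip appears as a hypothesis on $f$.
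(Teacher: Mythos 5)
The paper gives no proof of this statement: it is quoted verbatim from Bingham, Goldie and Teugels (Theorem 4.1.6, cited in the theorem header) and is consumed as an external ingredient in the proof of \cref{th:generalizedAbel2} and of \cref{prop:asymptoticPsiinv}. So there is no in-paper argument to compare against; what you have written is a self-contained reconstruction of the standard proof of the cited result, and it is essentially correct. The substitution $u=t/x$, the identification of the limit integrand $c\,k(1/u)u^{\rho-1}$ whose integral is $c\,\check k(\rho)$, the three-way split with the uniform convergence theorem on the compact middle, Potter bounds on the tails, and the further split of the small-$u$ region at $xu=a$ is exactly where the hypothesis that $f(x)/x^{\sigma}$ is bounded on $(0,a]$ and the convergence of $\check k$ at $\rho\pm\varepsilon$ inside the strip (and at the endpoint $\sigma$) get consumed; your closing remark correctly identifies why the lower edge $\sigma$ of the Mellin strip must appear as a hypothesis on $f$ near $0$. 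Two details deserve tightening. First, since $c\in\mathbb{R}$ may be zero or negative, Potter's theorem cannot be applied to $f$ itself; apply it to the positive regularly varying function $x^{\rho}\ell(x)$ and use $|f(t)|\le C\,t^{\rho}\ell(t)$ for $t$ large, which yields the same dominating bounds. Second, Potter's inequality is only valid when both arguments exceed some threshold $X_{0}$, whereas your sub-region of the small-$u$ tail is $xu>a$; this is harmless because the boundedness hypothesis holds for \emph{every} $a$, so you may take $a\ge X_{0}$, but that choice should be made explicitly. With those two repairs the argument is complete and matches the textbook proof the paper is citing.
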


The next result is a trivial corollary of \cref{th:generalizedAbel1}, considering limits as $x$ tends to $0$.

\begin{cor}\label{th:generalizedAbel2}
Let the Mellin transform $\check{k}$ of \thinspace$k$ converge at least in the
strip $\tau_{1}\leq\operatorname{Re}(z)\leq\tau_{2}$, where $-\infty<\tau
_{1}<\tau_{2}<\infty$. Let $\rho\in(\tau_{1},\tau_{2})$, $\ell$ a slowly
varying function, $c\in\mathbb{R}.$ If $f$ is measurable, $f(x)x^{-\tau_{2}}$
is bounded on every interval $[a,\infty)$ and%
\[
f(x)\sim cx^{\rho}\ell(1/x)\text{ as }x\rightarrow0
\]
then
\[
\int_{0}^{\infty}k(x/t)f(t)t^{-1}dt\sim c\check{k}(\rho)x^{\rho}%
\ell(1/x)\text{ as }x\rightarrow0
\]

\end{cor}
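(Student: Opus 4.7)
The plan is to derive \cref{th:generalizedAbel2} as an immediate consequence of \cref{th:generalizedAbel1} by inverting the spatial variable. The key idea is that $x\mapsto 1/x$ (and similarly $t\mapsto 1/t$) converts the regime $x\to 0$ into $x\to\infty$, and simultaneously reflects the kernel $k$, whose Mellin transform is then related to $\check{k}$ by a sign flip of its argument.

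First I would set $g(s):=f(1/s)$, $\tilde{k}(u):=k(1/u)$, and $y:=1/x$, and perform the substitution $t=1/s$ inside the integral to rewrite $\int_0^\infty k(x/t)f(t)t^{-1}dt = \int_0^\infty k(xs)g(s)s^{-1}ds = \int_0^\infty \tilde{k}(y/s)g(s)s^{-1}ds$. The right-hand side now has exactly the form that \cref{th:generalizedAbel1} controls, with the outer variable $y\to\infty$, so the corollary reduces to the theorem provided its hypotheses transfer correctly.

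Next I would verify each hypothesis. A change of variables $v=1/u$ in the definition of the Mellin transform gives $\check{\tilde{k}}(z)=\int_0^\infty u^{-z-1}k(1/u)du=\check{k}(-z)$, so $\check{\tilde{k}}$ converges in the reflected strip $[-\tau_2,-\tau_1]$. From $f(x)\sim cx^\rho\ell(1/x)$ as $x\to 0$ one reads off $g(s)\sim cs^{-\rho}\ell(s)$ as $s\to\infty$, and $-\rho$ lies in the interior $(-\tau_2,-\tau_1)$ since $\rho\in(\tau_1,\tau_2)$. The boundedness requirement of the theorem, ``$g(s)/s^{-\tau_2}$ bounded on $(0,a]$'', unwraps to ``$f(u)u^{-\tau_2}$ bounded on $[1/a,\infty)$'', which is precisely the assumption stated in the corollary. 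Applying \cref{th:generalizedAbel1} to $\tilde{k}$, $g$, exponent $-\rho$, and slowly varying part $\ell$ then yields $\int_0^\infty \tilde{k}(y/s)g(s)s^{-1}ds \sim c\,\check{\tilde{k}}(-\rho)\,y^{-\rho}\ell(y) = c\,\check{k}(\rho)\,y^{-\rho}\ell(y)$ as $y\to\infty$, and undoing the substitutions $y=1/x$ and $s=1/t$ gives the claimed asymptotic as $x\to 0$.

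The only obstacle is clerical: one must simultaneously track the sign flip of the regular-variation exponent (from $\rho$ to $-\rho$), the sign flip inside the Mellin transform (from $z$ to $-z$), and the relabelling of the convergence strip (from $(\tau_1,\tau_2)$ to $(-\tau_2,-\tau_1)$). Once these three items are aligned, the ``$-\rho$'' produced by \cref{th:generalizedAbel1} and the ``$-z$'' hidden inside $\check{\tilde{k}}$ cancel to return $\check{k}(\rho)$, and no analytic step beyond \cref{th:generalizedAbel1} is needed, consistent with the paper's remark that this is a trivial corollary.
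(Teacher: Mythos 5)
Your proposal is correct and follows essentially the same route as the paper's own proof: substitute $t=1/s$, pass to $\widetilde{f}(s)=f(1/s)$ and $\widetilde{k}(u)=k(1/u)$, note that $\check{\widetilde{k}}(z)=\check{k}(-z)$ converges in the reflected strip $[-\tau_2,-\tau_1]$, and apply \cref{th:generalizedAbel1} at infinity with exponent $-\rho$. All the bookkeeping of signs, strips, and the boundedness hypothesis is handled exactly as in the paper.
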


\begin{proof}
\begin{align*}
\int_{0}^{\infty}k(x/t)f(t)t^{-1}dt  &  =\int_{0}^{\infty}k(xu)f(1/u)u^{-1}%
du\\
&  =\int_{0}^{\infty}\widetilde{k}(1/(xu))\widetilde{f}(u)u^{-1}du
\end{align*}
where $\widetilde{f}(x)=f(1/x)$, $\widetilde{f}(x)/x^{-\tau_{2}}$ bounded on
every interval $(0,1/a]$ with%
\[
\widetilde{f}(x)=f(1/x)\sim cx^{-\rho}\ell(x)\text{ as }x\rightarrow\infty
\]
and $\widetilde{k}(x)=k(1/x)$ is such that its Mellin transform converges in
the strip $-\tau_{2}\leq\operatorname{Re}(z)\leq-\tau_{1}$. \cref{th:generalizedAbel1}
above therefore gives the result.
\end{proof}

\section{Proofs}

\subsection{Proof of \cref{thm:generalconstruction}}

The proof is an adaption of the proof for the size-biased construction in \citep[Section 4]{Perman1992}.
The mean measure $\nu'$ of the Poisson random measure $N'$ can be expressed as
\begin{align*}
\nu'(dw,d\theta,dt)&=\rho(dw)\mu_w(d\theta)\lambda_w(t)dt\\
&=\psi(t)dt\times \frac{\lambda_w(t)\rho(dw)}{\psi(t)}\times \mu_w(d\theta)\\
&=\psi(t)dt\times\phi_t(dw)\times\mu_w(d\theta)
\end{align*}
This is the mean measure of a marked Poisson point process, where $(T_1,T_2,\ldots)$ are the points of an inhomogeneous Poisson point process with intensity $\psi(t)$, hence admit the representation  Equation~\eqref{eq:seqconst}, and the marks $(W_i,\theta_i)$ have conditional distribution $\phi_{t}(dw) \mu_{w}(d\theta)$ as shown in Equation~\eqref{eq:seqconst}.
Let $(\overline X_{n,1}',\ldots,\overline X_{n,n}')=(X'_{\pi_1},\ldots,X'_{\pi_n})$ where $\pi_n$ is a random permutation of $\{1,\ldots,n\}$, and $\overline X_{n,i}'=(\overline T_{n,i},\overline W_{n,i},\overline \theta_{n,i})$. By properties of the Poisson process on the real line, the random variables $\overline T_{n,1},\ldots,\overline T_{n,n}$ are iid given $T_{n+1}=t_{n+1}$, with pdf $$\frac{\psi(t)\1{0<t<t_{n+1}}}{\Psi(t_{n+1})}.$$ Hence, given $T_{n+1}=t_{n+1}$, the marks $\overline W_{n,i}$ and $\overline \theta_{n,i}$ are also iid, with conditional distribution $\varphi_{t_{n+1}}(d\overline w_{n,i})\mu_{\overline w_{n,i}}(d\overline\theta_{n,i})$ where
\begin{align*}
\varphi_{t_{n+1}}(d\overline w_{n,i})&=\mathbb E\left [\phi_{\overline T_{n,i}}(d\overline w_{n,i})\mid T_{n+1}=t_{n+1}\right ] \\
&=\int_0^{t_{n+1}}\frac{\lambda_w(t)\rho(d\overline w_{n,i})}{\psi(t)}\frac{\psi(t)}{\Psi(t_{n+1})} dt\\
&=\frac{\Lambda_w(t_{n+1})\rho(d\overline w_{n,i})}{\Psi(t_{n+1})}.
\end{align*}

\subsection{Proof of \cref{prop:convergenceiid}}

The proof is similar to that of \cite[Section 3.1]{Lee2016}. Let $f:\Theta\rightarrow (0,\infty)$ be a measurable function.
\begin{align*}
\mathbb E\left [e^{-\widetilde G_n(f)}\right ]&=\mathbb E\left [e^{-\sum_{i=1}^n \widetilde W_{n,i} f(\widetilde\theta_{n,i})}\right ]\\
&= \mathbb E\left [e^{- \widetilde W_{n,1} f(\widetilde\theta_{n,1})}\right ]^n\\
&=\left (\int_S e^{-wf(\theta)} \widetilde\varphi_n(dw)\mu_w(d\theta)\right )^n\\
&=\left (1-\int_S (1-e^{-wf(\theta)}) \widetilde\varphi_n(dw)\mu_w(d\theta)\right )^n\\
&=\left (1-\frac{1}{n}\int_S (1-e^{-wf(\theta)}) \Lambda_w(\Psi^{-1}(n))\rho(dw)\mu_w(d\theta)\right )^n
\end{align*}
Note that $ \Lambda_w(\Psi^{-1}(n))\leq 1, \forall n$ and  $\Lambda_w(\Psi^{-1}(n))\rightarrow 1$ as $n$ tends to infinity. By the bounded convergence theorem, we therefore have
$$\int_S (1-e^{-wf(\theta)}) \Lambda_w(\Psi^{-1}(n))\rho(dw)\mu_w(d\theta)\rightarrow \int_S (1-e^{-wf(\theta)}) \rho(dw)\mu_w(d\theta)$$
as $n\rightarrow\infty$. Additionally, for any real sequence $(a_n)_{n\geq 1}$ converging to $a$ we have $(1-{a_n}/{n})^n\rightarrow e^{-a}$ as $n\rightarrow\infty$. We therefore obtain
$$
\mathbb E\left [e^{-\widetilde G_n(f)}\right ] \rightarrow \exp\bigg(-\int_S (1-e^{-wf(\theta)})\rho(dw)\mu_w(d\theta)\bigg),
$$
where the right-handside is equal to the Laplace functional $\mathbb E[e^{-G(f)}]$ of the CRM $G\sim \CRM(\rho,\mu_w)$ by Campbell's theorem~\citep{Kingman1993}.

%

\subsection{Proof of \cref{prop:errormgf}}

By the marking theorem for Poisson point processes~\cite[Chapter 5]{Kingman1993}, given $T_{n+1}=t_{n+1}$, the random measure $\sum_{i\mid T_i\geq t_{n+1}}\delta_{X_i}$ is a Poisson random measure with mean measure $(1-\Lambda_w(t_{n+1}))\rho(dw)\mu_w(d\theta)$. The result follows from Campbell's theorem and the fact that $T_{n+1}=\Psi^{-1}(\xi_{n+1})$.

\subsection{Proof of \cref{prop:asymptoticerror}}
\label{sec:proofasympt}

We state a slightly more general version of \cref{prop:asymptoticerror}, where the constant $\zeta_0$ in Equation~\eqref{eq:RVrho} can more generally be any slowly varying function $\ell_0(1/x)$. We then prove this generalized proposition.

\begin{prop}\label{prop:asymptoticerrorgeneral}[Slight generalization of \cref{prop:asymptoticerror}]
Assume that the mean measure $\rho(dw)$ is absolutely continuous with respect to the Lebesgue measure with density function $\rho(w)$ such that
\begin{align}
\label{eq:RVrhogen}
\rho(w)\sim w^{-1-\sigma}\ell_0(1/w)\text{ as }w\rightarrow 0
\end{align}
where $\sigma\in(0,1)$ and $\ell_0$ is a slowly varying function.  Assume additionally that $\Lambda_{w}(t)=1-k(wt)$ where $k$ is a positive function on
$(0,\infty)$ such that its Mellin transform (see \cref{def:Mellin}) $\check{k}$ converges in some open interval containing $[\sigma-2,\sigma-1]$. Assume additionally that either (i) $k$ is differentiable with derivative $k'$ and that the Mellin transform $\check k'$ of $k'$ is defined in some open interval containing $\sigma-1$, or (ii) that $k(x)=\1{x\leq 1}$. Then we have
\begin{equation}
R_n \sim C_1(\sigma) n^{1-1/\sigma}\ell_{**}(n)~~\text{ almost surely as }n\to\infty
\label{eq:almostsureRngen}
\end{equation}
where  $\ell_{**}$ is some slowly varying function that depends on $\ell_0$ and $\sigma$ but not $\lambda_w$ and the constant $C_1(\sigma)$ is given by $C_1(\sigma)=(1-\sigma)^{-1}$ if $k(x)=\1{x\leq 1}$ and $C_1(\sigma)=\frac{\check{k}(\sigma-1)}{(-\check k'(\sigma-1))^{1-1/\sigma}}$ if $k$ is differentiable, and only depends on the arrival time distribution $\Lambda_w$ and $\sigma$.
\end{prop}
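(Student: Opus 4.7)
The plan is to decompose the truncation error as $R_n=\mathbb{E}[R_n\mid T_{n+1}]+(R_n-\mathbb{E}[R_n\mid T_{n+1}])$ and control each piece separately via the Mellin/Abelian machinery from \cref{sec:app:regularvariation}. By the conditional moments recorded just before the proposition (for $f(w,\theta)=w$), the main quantities to analyse are $m(t):=\int_0^\infty w\,k(wt)\rho(w)dw$ and $v(t):=\int_0^\infty w^2\,k(wt)\rho(w)dw$, together with the asymptotic behaviour of $T_{n+1}=\Psi^{-1}(\xi_{n+1})$. Three layers are needed: asymptotics of $\Psi$ (to get $T_{n+1}$), asymptotics of $m$ (to get the deterministic rate), and bounds from $v$ (to upgrade from convergence in probability to almost sure).

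For the first layer, I write $\Psi(t)=\int_0^\infty(1-k(wt))\rho(w)dw$. Under the differentiability assumption, Fubini turns this into $\Psi(t)=-\int_0^\infty k'(v)\,\overline\rho(v/t)dv$, and Karamata (\cref{th:Karamata2}) gives $\overline\rho(x)\sim\sigma^{-1}x^{-\sigma}\ell_0(1/x)$ as $x\to 0$. The generalized Abelian theorem \cref{th:generalizedAbel2}, applied with kernel $-k'$ and $f=\overline\rho$, then yields $\Psi(t)\sim \tfrac{-\check k'(\sigma-1)}{\sigma}\,t^\sigma\ell_0(t)$. In the indicator case $k(x)=\1{x\leq 1}$, $\Psi(t)=\overline\rho(1/t)$ gives the same shape with constant $1/\sigma$. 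Inverting this regularly varying function by the standard asymptotic-inverse theorem, and using $\xi_{n+1}/n\to 1$ a.s., gives $T_{n+1}\sim\bigl(\sigma n/(-\check k'(\sigma-1))\bigr)^{1/\sigma}\ell^*(n)$ almost surely, for some slowly varying $\ell^*$ depending only on $\ell_0$ and $\sigma$.

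For the second layer, I apply the same Abelian machinery to $m(t)=\int_0^\infty k(wt)\cdot w\rho(w)dw$: setting $h(y)=k(1/y)$, whose Mellin transform equals $\check k(-\cdot)$ and hence converges near $1-\sigma$ by hypothesis, \cref{th:generalizedAbel2} yields $m(t)\sim\check k(\sigma-1)\,t^{\sigma-1}\ell_0(t)$. An analogous calculation using the $\sigma-2$ strip hypothesis delivers $v(t)\sim\check k(\sigma-2)\,t^{\sigma-2}\ell_0(t)$. Substituting the a.s.\ asymptotic of $T_{n+1}$ from the first layer into $m(T_{n+1})$, justified by Potter's bounds on slowly varying functions, and collecting the powers of $n$, $\sigma$, and the remaining slowly varying factors into a single slowly varying $\ell_{**}$, I obtain $\mathbb{E}[R_n\mid T_{n+1}]\sim C_1(\sigma)\,n^{1-1/\sigma}\ell_{**}(n)$ almost surely with the stated $C_1(\sigma)$; the indicator case reduces to Karamata on $\int_0^{1/t}w\rho(w)dw$ and yields $C_1(\sigma)=1/(1-\sigma)$.

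The hard part will be the third layer: promoting almost sure asymptotics of the conditional mean to almost sure asymptotics of $R_n$ itself. The variance rate gives $v(T_{n+1})/m(T_{n+1})^2=O(n^{-1})$, so conditional Chebyshev yields convergence only in probability and the tail bound is not summable in $n$. The strategy is to exploit monotonicity: since $f(w,\theta)=w\geq 0$, the sequence $R_n$ is non-increasing in $n$, while $a_n:=n^{1-1/\sigma}\ell_{**}(n)$ is regularly varying with negative exponent (as $\sigma<1$). Along a geometric subsequence $n_k=\lfloor(1+\delta)^k\rfloor$ the Chebyshev tails are summable, so Borel--Cantelli gives $R_{n_k}/a_{n_k}\to C_1(\sigma)$ almost surely; monotonicity then sandwiches $R_n/a_n$ for $n\in[n_k,n_{k+1}]$ between ratios converging to $(1+\delta)^{\pm(1/\sigma-1)}C_1(\sigma)$, and intersecting over a countable sequence $\delta\downarrow 0$ completes the argument.
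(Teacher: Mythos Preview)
Your proposal is correct and follows essentially the same three-layer route as the paper: Abelian/Mellin asymptotics for $\Psi$ and $\Psi^{-1}$ (the paper packages this as \cref{prop:asymptoticPsiinv}), Abelian asymptotics for the conditional mean and variance via \cref{th:generalizedAbel2}, and then Chebyshev plus Borel--Cantelli along a sparse subsequence combined with a monotonicity sandwich. The only cosmetic difference is the subsequence: the paper uses $a_n=n^2$ (so consecutive terms have ratio tending to $1$, and the sandwich closes directly), whereas you use a geometric subsequence and an extra $\delta\downarrow 0$ step; both are standard and equivalent here.
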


In order to \cref{prop:asymptoticerrorgeneral}, we first state the following proposition.
\begin{prop}\label{prop:asymptoticPsiinv}
Assume that
\begin{equation}
\overline \rho(x)\sim x^{-\sigma}\ell(1/x)\text{ as }x\to 0\label{eq:tailLevyasymp}
\end{equation}
where $\sigma\in(0,1)$ and $\ell$ is a slowly varying function. Assume additionally that
\begin{equation}
\Lambda_{w}(t)=1-k(wt)
\end{equation}
 where $k$ is a positive and differentiable function on
$(0,\infty)$, with derivative $k'$. Assume that the Mellin transform $\check k'$ of $k'$ is defined in some open interval containing $\sigma-1$. Then
\begin{align}
\Psi(t)\sim -\check k'(\sigma-1)t^{\sigma}\ell(t)~~~\text{ and }~~~\Psi^{-1}(t)\sim (-\check k'(\sigma-1))^{-1/\sigma}t^{1/\sigma} \ell_*(t)~~\text{ as }t\rightarrow\infty
\end{align}
where $\ell_*$ is another slowly varying function depending on $\ell$ and $\sigma$, and defined in Equation~\eqref{eq:ellforPsi}. If $\ell(x)=c$ is constant, then we simply have $\ell_*(x)=c^{-1/\sigma}$. In particular, this is the case for both the generalized gamma process and the stable beta process, which verify condition \eqref{eq:tailLevyasymp} when $\sigma>0$ with $\ell(x)=\frac{\alpha}{\sigma\Gamma(1-\sigma)}$ for the GGP and $\ell(x)=\frac{\alpha}{\sigma B(1-\sigma,c+\sigma)}$ for the SBP.
\end{prop}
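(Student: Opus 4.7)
The strategy is to show that $\Psi$ is regularly varying at infinity with index $\sigma$ by recasting it as a Mellin-type convolution and applying the generalized Abelian theorem (\cref{th:generalizedAbel1}). The asymptotic for $\Psi^{-1}$ then follows from standard results on the inversion of regularly varying functions.

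First I would rewrite $\Psi$ in a form amenable to \cref{th:generalizedAbel1}. Starting from $\Psi(t)=\int_0^\infty(1-k(wt))\rho(dw)$ and using $\rho(dw)=-d\overline\rho(w)$, integration by parts gives
\[
\Psi(t) = -\bigl[(1-k(wt))\overline\rho(w)\bigr]_0^\infty - t\int_0^\infty \overline\rho(w)\,k'(wt)\,dw.
\]
The boundary term at $w=0$ vanishes because $k(0)=1$ (equivalently $\Lambda_0(t)=0$), and at $w=\infty$ it vanishes because $\overline\rho(w)\to 0$ under the L\'evy condition \eqref{eq:conditionLevy}. The change of variable $u=1/w$ then recasts the integral as
\[
\Psi(t) = -t\int_0^\infty g(u)\,k'(t/u)\,u^{-1}\,du, \qquad g(u):=\overline\rho(1/u)/u.
\]
The hypothesis $\overline\rho(x)\sim x^{-\sigma}\ell(1/x)$ as $x\to 0$ translates into $g(u)\sim u^{\sigma-1}\ell(u)$ as $u\to\infty$, so $g$ is regularly varying of index $\sigma-1$.

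Applying \cref{th:generalizedAbel1} to kernel $k'$, function $g$, and exponent $\sigma-1$ (which lies inside the strip of convergence of $\check{k'}$ by assumption) yields
\[
\int_0^\infty g(u)\,k'(t/u)\,u^{-1}\,du \sim \check{k'}(\sigma-1)\,t^{\sigma-1}\ell(t)\quad\text{as }t\to\infty,
\]
hence $\Psi(t)\sim-\check{k'}(\sigma-1)\,t^\sigma\ell(t)$, which is the first claim; note that $k'\le 0$ so $-\check{k'}(\sigma-1)>0$, consistent with $\Psi$ being increasing and positive. For the GGP and SBP the explicit constants $\ell(x)=\alpha/(\sigma\Gamma(1-\sigma))$ and $\ell(x)=\alpha/(\sigma B(1-\sigma,c+\sigma))$ follow from Karamata's theorem (\cref{th:Karamata}) applied to the explicit size densities.

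For the inverse, $\Psi$ is continuous and strictly increasing, so I would invoke the standard de~Bruijn inversion for regularly varying functions: if $\Psi(t)\sim C t^\sigma\ell(t)$ with $C>0$ and $\sigma>0$, then $\Psi^{-1}(y)\sim C^{-1/\sigma}y^{1/\sigma}\ell_*(y)$, where $\ell_*$ is slowly varying and determined implicitly by $\ell_*(y)^\sigma\ell(C^{-1/\sigma}y^{1/\sigma}\ell_*(y))=1$, which collapses to $\ell_*(y)=c^{-1/\sigma}$ when $\ell\equiv c$. I expect the main obstacle to be verifying the technical side-conditions of \cref{th:generalizedAbel1}, in particular the boundedness of $g(u)/u^{a}$ on small intervals where $a$ is the left endpoint of the Mellin strip of $\check{k'}$, and justifying the vanishing of the boundary terms in the integration by parts; these have to be teased out of the local behavior of $\overline\rho$ near $0$ together with the tail properties of $k$ forced by the integrability of $\Lambda_w$ against $\rho$.
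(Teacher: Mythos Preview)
Your proposal is correct and follows essentially the same route as the paper: integration by parts to express $\Psi(t)=-t\int_0^\infty k'(wt)\overline\rho(w)\,dw$, then the generalized Abelian theorem (the paper cites \cref{th:generalizedAbel2}, which is your \cref{th:generalizedAbel1} after the same change of variable you perform), and finally a regular-variation inversion (the paper cites \cite[Lemma 22]{Gnedin2007}, which is the de~Bruijn result you invoke). The only point to tighten is the boundary term at $w\to 0$: saying ``$k(0)=1$'' is not enough since $\overline\rho(0^+)=\infty$; the paper just records that $\lim_{w\to 0}\Lambda_w(t)\overline\rho(w)=0$ follows from the regular-variation hypothesis together with the integrability condition $\int_0^\infty\Lambda_w(t)\rho(dw)<\infty$, exactly as you anticipate in your final paragraph.
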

\begin{proof}[Proof of \cref{prop:asymptoticPsiinv}]
The assumptions \eqref{eq:RVrhogen} and the first condition in Equation~\eqref{eq:condLambda} both imply that  $$\lim_{w\rightarrow 0}\Lambda_w(t)\overline\rho(w)=0.$$
Using integration by parts
\begin{align*}
\Psi(t)&=\int_0^\infty \Lambda_w(t)\rho(dw)\\
&=[ \Lambda_w(t)\overline\rho(w)]_{w=0}^\infty + \int_0^\infty \frac{\partial \Lambda_w(t)}{dw} \overline\rho(w)dw\\
&=\int_0^\infty \frac{\partial \Lambda_w(t)}{dw} \overline\rho(w)dw.
\end{align*}

Now assume $\Lambda_w(t)=1-k(wt)$ where $k$ is differentiable on $(0,\infty)$. Then
$$
\frac{\partial \Lambda_w(t)}{dw}=-t k'(wt).
$$
If the Mellin transform $\check k'$ of $k'$ is defined in some open interval containing $\sigma-1$, then \cref{th:generalizedAbel2} implies
\begin{align}
\Psi(t)\sim -\check k'(\sigma-1)t^{\sigma}\ell(t)
\end{align}
as $t$ tends to infinity. In the case $k(x)=\1{x\leq 1}$, $k$ is not differentiable, but we have directly $$\Psi(t)=\overline\rho(1/t)\sim t^\sigma\ell(t).$$

 Now we use inversion formulas for regularly varying function to get the asymptotic regime for $\Psi^{-1}(t)$. Assume $\sigma>0$, then~\citep[Lemma 22]{Gnedin2007} implies
\begin{align}
\Psi^{-1}(t)\sim (-\check k'(\sigma-1))^{-1/\sigma}t^{1/\sigma} \ell_*(t)
\end{align}
as $t\rightarrow\infty$, where $\ell_*$ is a slowly varying function defined by
\begin{equation}
\ell_*(t) =(\ell^{1/\sigma}(t^{1/\sigma}))^{\#}\label{eq:ellforPsi}
\end{equation}
where $\ell^{\#}$ denotes the de Bruijn conjugate of the slowly varying function $\ell$~\citep[Theorem 1.5.13]{Bingham1989}. Note that $\ell_*$ only depends on $\ell$ and $\sigma$, but not the arrival time distribution $\Lambda_w(t)$.

\end{proof}

Assume that the mean measure $\rho(dw)$ is absolutely continuous with respect to the Lebesgue measure with density function $\rho(w)$ verifying
\begin{align}
\rho(w)\sim w^{-1-\sigma}\ell_0(1/w)\text{ as }w\rightarrow 0,
\end{align}
where $\sigma\in[0,1]$ and $\ell_0$ is a slowly varying function. Equation~\eqref{eq:RVrhogen} and \citep[Proposition 1.5.8]{Bingham1989} imply that
\begin{equation}
\overline \rho(x)\sim x^{-\sigma}\ell(1/x)
\end{equation}
as $x$ tends to 0 where $\ell$ is a slowly varying function defined by
$$
\ell(1/x)=\left \{
\begin{array}{ll}
  \ell_0(1/x)/\sigma & \textrm{ if } \sigma>0 \\
  \int_x^\infty u^{-1}\ell_0(1/u)du & \textrm{ if } \sigma=0
\end{array}\right . .
$$

Assume additionally that $\Lambda_{w}(t)=1-k(wt)$ where $k$ is a positive function on
$(0,\infty)$ such that its Mellin transform (see \cref{def:Mellin}) $\check{k}$ converges in some open interval containing $[\sigma-2,\sigma-1]$. Note that
\begin{align*}
\mathbb E[R_n\mid T_{n+1}]=\int_0^\infty  w (1-\Lambda_w(T_{n+1}))\rho(dw),~~
\mathbb V[R_n\mid T_{n+1}]=\int_0^\infty  w^2 (1-\Lambda_w(T_{n+1}))\rho(dw).
\end{align*}
As $T_{n+1}$ tends to infinity almost surely as $n$ tends to infinity, \cref{th:generalizedAbel2} implies
\begin{align}
\mathbb E[R_n\mid T_{n+1}] & \sim \check{k}(\sigma-1)T_{n+1}^{\sigma-1}\ell_0
(T_{n+1})\label{eq:expectation1}\\
\mathbb V[R_n\mid T_{n+1}] & \sim \check{k}(\sigma-2)T_{n+1}^{\sigma-2}%
\ell_0(T_{n+1})\label{eq:variance1}
\end{align}
almost surely as $n$ tends to infinity.

As $T_{n+1}=\Psi^{-1}(\gamma_{n+1})$ where $\gamma_{n+1}\sim n$ almost surely as $n$ tends to infinity, using \cref{prop:asymptoticPsiinv} we obtain
\begin{equation}
T_{n+1}\sim (-\check k'(\sigma-1))^{-1/\sigma}n^{1/\sigma}  \ell_*(n) \label{eq:asymptTn}
\end{equation}
almost surely as $n$ tends to infinity. Combining Equation \eqref{eq:asymptTn} with Equations~\eqref{eq:expectation1} and~\eqref{eq:variance1}, we obtain
\begin{align}
\mathbb E[R_n\mid T_{n+1}] & \sim \frac{\check{k}(\sigma-1)}{(-\check k'(\sigma-1))^{1-1/\sigma}}n^{1-1/\sigma}\ell_*^{\sigma-1}(n)\ell_0(t^{1/\sigma}\ell_*(n))\label{eq:temp1}\\
\mathbb V[R_n\mid T_{n+1}] & \sim \frac{\check{k}(\sigma-2)}{(-\check k'(\sigma-1))^{1-2/\sigma}}n^{1-2/\sigma}\ell_*^{\sigma-2}(n)\ell_0(t^{1/\sigma}\ell_*(n))\label{eq:temp2}
\end{align}
almost surely as $n$ tends to infinity. Note that if $\ell_0(t)=\zeta_0$ is constant, then all the other slowly varying functions are also constant with
\begin{align}
\ell(x)=\frac{\zeta_0}{\sigma},~~~\ell_*(x)=\ell(x)^{-1/\sigma}=\left ( \frac{\zeta_0}{\sigma}\right )^{-1/\sigma}.
\end{align}
Finally, Equation \eqref{eq:almostsureRngen} follows similarly to the proof of \citep[Proposition 2]{Gnedin2007}. Using Chebyshev's inequality
\begin{align*}
\Pr\left (\left | \frac{R_n}{\mathbb E[R_n\mid T_{n+1}]}-1 \right |>\epsilon \mid T_{n+1}\right)\leq \frac{\mathbb V[R_n\mid T_{n+1}]}{\epsilon^2\mathbb E[R_n\mid T_{n+1}]^2 }
\end{align*}
Take $a_n=n^2$. As $$\frac{\mathbb V[R_n\mid T_{n+1}]}{\mathbb E[R_n\mid T_{n+1}]^2 }\asymp n^{-1},$$
by the Borel-Cantelli lemma, given $T_n$, $$R_{a_n}\sim \mathbb E[R_{a_n}|T_{a_n+1}]$$
almost surely as $n\to\infty$. As $R_n$ is decreasing, we have, for any $m^2\leq n\leq (m+1)^2$
$$
\frac{R_{(m+1)^2}}{\mathbb E[R_{m^2}\mid T_{m^2+1}]} \leq \frac{R_n}{\mathbb E[R_n\mid T_{n+1}]}\leq \frac{R_{m^2}}{\mathbb E[R_{(m+1)^2}\mid T_{(m+1)^2+1}]}
$$
and it follows by sandwiching that $$R_{n}\sim \mathbb E[R_{n}|T_{n+1}]$$ almost surely as $n\to\infty$. Combining this with Equation~\eqref{eq:temp1} gives the final result, with $\ell_{**}$ the slowly varying function defined by
$$
\ell_{**}(t)=\ell_*^{\sigma-1}(t)\ell_0(t^{1/\sigma}\ell_*(t)).
$$

Note that in the case of the GGP, the different slowly varying functions are all constant functions
\begin{align}
\ell_0(x)&=\frac{\alpha}{\Gamma(1-\sigma)}\\
\ell(x)&=\frac{\alpha}{\sigma\Gamma(1-\sigma)}\\
\ell_*(x)&=\ell(x)^{-1/\sigma}=\left ( \frac{\alpha}{\sigma\Gamma(1-\sigma)}\right )^{-1/\sigma}\\
\ell_{**}(x)&=\left ( \frac{\alpha}{\sigma\Gamma(1-\sigma)}\right )^{-1+1/\sigma}\frac{\alpha}{\Gamma(1-\sigma)}=\left ( \frac{\alpha}{\Gamma(1-\sigma)}\right )^{1/\sigma}\sigma^{1-1/\sigma}
\end{align}
For the SBP, we have
\begin{align}
\ell_0(x)&=\frac{\alpha}{B(1-\sigma,c+\sigma)}\\
\ell(x)&=\frac{\alpha}{\sigma B(1-\sigma,c+\sigma)}\\
\ell_*(x)&=\ell(x)^{-1/\sigma}=\left ( \frac{\alpha}{\sigma B(1-\sigma,c+\sigma)} \right )^{-1/\sigma}\\
\ell_{**}(x)&=\left (\frac{\alpha}{B(1-\sigma,c+\sigma)}\right )^{1/\sigma}\sigma^{1-1/\sigma}
\end{align}
\subsection{Proof of \cref{prop:protobound}}

From \citep{Campbell2018}, we have the protobound
\begin{align*}
0\leq \frac{1}{2}||p_{N,\infty}(X_{1:N})-p_{N,n}(X_{1:N})||\leq 1-\mathbb P(\text{supp}(X_{1:N})\subseteq \text{supp}(G_n))
\end{align*}
In our case,
\begin{align*}
&\mathbb P(\text{supp}(X_{1:N})\subseteq \text{supp}(G_n))\\ &=\mathbb E\left [\prod_{i=n+1}^{\infty} \pi(W_i)^N\right ]\\
&=\mathbb E\left [\mathbb E \left [e^{N\sum_{i\geq n+1}\log \pi(W_i)}\mid T_{n+1}\right ]\right ]\\
&=\mathbb E\left [ \exp\left (-\int_0^\infty  (1-e^{N\log \pi(w)})(1-\Lambda_w(T_{n+1}))\rho(dw)   \right )\right ]\\
&=\int_0^\infty \exp\left (-\int_0^\infty  (1- \pi(w)^N)(1-\Lambda_w(\Psi^{-1}(\gamma))\rho(dw)   \right )\gammadist(\gamma;n+1,1)d\gamma\\
&\geq \exp\left (- \int_0^\infty \int_0^\infty  (1- \pi(w)^N)(1-\Lambda_w(\Psi^{-1}(\gamma))\rho(dw)   \gammadist(\gamma;n+1,1)d\gamma\right )
\end{align*}
where the last inequality follows from Jensen's inequality.


\section{Mellin transforms}

\begin{table}[h]
\caption{Kernels, Mellin transforms and asymptotic constants for different arrival time distributions}
\label{tab:kernels}
\small
\centering
\setlength{\tabcolsep}{2.4pt}
\begin{tabular}{@{}lcccccc@{}}
\toprule
  Name & $\Lambda_w(t)$ & $k(x)$ & $\check{k}(-z)$ & $k'(x)$&$-\check{k'}(-z)$ & $C_1(\sigma)$\\
  \midrule
  Deterministic & $\1{t\geq 1/w}$ & $\1{x\leq 1}$ & $z^{-1}$ &  -- & -- & $(1-\sigma)^{-1}$ \\
  Exponential& $1-e^{-wt}$ & $e^{-x}$ & $\Gamma(z)$ & $-e^{-x}$ & $\Gamma(z)$ & $\Gamma(1-\sigma)^{1/\sigma}$ \\
  Gamma &  $1-\frac{\Gamma(\kappa,\kappa wt)}{\Gamma(\kappa)}$ & $\frac{\Gamma(\kappa,x\kappa)}{\Gamma(\kappa)}$ & $\frac{\Gamma(z+\kappa)}{z \Gamma(\kappa)\kappa^z}$ &  $\frac{-\kappa^\kappa x^{\kappa-1}e^{-\kappa x}}{\Gamma(\kappa)}$ & $\frac{\kappa^{1-z}\Gamma(\kappa+z-1)}{\Gamma(\kappa)}$ & $\frac{(\kappa-\sigma)\Gamma(\kappa-\sigma)^{1/\sigma}}{(1-\sigma)\Gamma(\kappa)^{1/\sigma}}$\\
  Inv. gamma & $\frac{\Gamma(\kappa,\kappa /(wt))}{\Gamma(\kappa)}$ & $\frac{\gamma(\kappa,\kappa /x)}{\Gamma(\kappa)}$ &$\frac{\Gamma(\kappa-z)}{z\Gamma(\kappa)\kappa^{-z}}$ & $-\frac{\kappa^\kappa e^{-\kappa/x}}{x^{\kappa+1}\Gamma(\kappa)}$ &
  $\frac{\kappa^{z-1}\Gamma(-z+\kappa+1)}{\Gamma(\kappa)}$ & $\frac{\Gamma(\kappa+\sigma)^{1/\sigma}}{(1-\sigma)(\kappa+\sigma-1)\Gamma(\kappa)^{1/\sigma}}$\\
  Gen. Pareto& $1-(1+wt)^{-c}$ & $(1+x)^{-c}$ & $B(z,c-z)$ & $-c(1+x)^{-c-1}$ & $cB(z,c+1-z)$ & $\frac{B(1-\sigma,c+\sigma-1)}{(cB(1-\sigma,c+\sigma))^{1-1/\sigma}}$
\end{tabular}
\end{table}

\subsection{Deterministic kernel}

Take $k(t)=\1{t\leq 1}.$ Then
\begin{align*}
\check{k}(z)  =\int_{0}^{\infty}t^{-z-1}k(t)dt=\int_0^1 t^{-z-1}=-z^{-1}
\end{align*}
if $z<0$.

\subsection{Gamma kernel}
Take $k(t)=\frac{\Gamma(\kappa,\kappa t)}{\Gamma(\kappa)}$ for $\kappa\geq
1$.
\begin{align*}
\check{k}(z) &  =\int_0^\infty u^{-z-1}k(u)du\\
&  =\frac{1}{\Gamma(\kappa)}\int_{0}^{\infty}u^{-z-1}\int_{\kappa u}^{\infty}%
v^{\kappa-1}e^{- v}dvdu\\
&  =\frac{1}{\Gamma(\kappa)}\int_{0}^{\infty}\int_{0}^{v/\kappa}u^{-z-1}v^{\kappa
-1}e^{- v}dudv\\
&  =\frac{-\kappa^z}{z\Gamma(\kappa)}\int_{0}^{\infty}v^{\kappa-z-1}e^{-v}dv\\
&  =\frac{-\Gamma(\kappa-z)\kappa^z}{z\Gamma(\kappa)}%
\end{align*}
which converges for $z<0$. Additionally, $k'(t)=-\frac{\kappa^\kappa t^{\kappa-1}e^{-\kappa t}}{\Gamma(\kappa)}$ hence
\begin{align*}
\check{k'}(z) &  =-\int_0^\infty t^{-z-1}k'(t)dt\\
&=-\frac{\kappa^\kappa }{\Gamma(\kappa)}\int_0^\infty t^{\kappa-z-2}e^{-\kappa t}dt\\
&=-\frac{ \Gamma(\kappa-z-1)\kappa^{z+1}}{\Gamma(\kappa)}
\end{align*}
which converges for $z<\kappa-1$.

\subsection{Inverse gamma kernel}

Take $k(t)=\frac{\gamma(\kappa,\kappa /t)}{\Gamma(\kappa)}$. Note that if $\kappa=1$,
$k(t)=(1-e^{-1/t})$. Then%
\begin{align*}
\check{k}(z) &  =\int_{0}^{\infty}t^{-z-1}k(t)dt\\
&=\int_{0}^{\infty}u^{z-1}k(1/u)du\\
&  =\int_{0}^{\infty}u^{z-1}\frac{\gamma(\kappa,\kappa u)}{\Gamma(\kappa)}du\\
&  =\frac{1}{\Gamma(\kappa)}\int_{0}^{\infty}u^{z-1}\int_{0}^{\kappa u}v^{\kappa
-1}e^{-v}dvdu\\
&  =\frac{1}{\Gamma(\kappa)}\int_{0}^{\infty}v^{\kappa-1}e^{-v}\int%
_{v/\kappa}^{\infty}u^{z-1}dudv\\
&  =-\frac{1}{z\Gamma(\kappa)\kappa^z}\int_{0}^{\infty}v^{\kappa+z-1}e^{-v}dv\text{ if
}z<0\\
&  =-\frac{\Gamma(\kappa+z)}{z\Gamma(\kappa)\kappa^{z}}\text{ if }\kappa+z>0
\end{align*}
therefore defined for $z\in(-\kappa,0)$. Note that
$$
\check{k}(z)\sim -1/z
$$
as $\kappa$ tends to infinity, which corresponds to the inverse-L\'evy case.

We have
$$
k'(t)=-\frac{\kappa^\kappa}{t^{\kappa+1}\Gamma(\kappa)}e^{-\kappa/t}
$$
and
\begin{align*}
\check{k'}(z) &  = \int_{0}^{\infty}t^{-z-1}k'(t)dt\\
&=\int_{0}^{\infty}u^{z-1}k'(1/u)du\\
&  =-\frac{\kappa^{\kappa}}{\Gamma(\kappa)}\int_{0}^{\infty}u^{z+\kappa}e^{-\kappa u}du\\
&=-\frac{\kappa^{-z-1}\Gamma(z+\kappa+1)}{\Gamma(\kappa)}
\end{align*}
defined for $z\in(-1-\kappa,\infty)$. Note again that $\check{k'}(z)\rightarrow -1$ as $\kappa\rightarrow\infty$ (inverse L\'evy case).

\subsection{Generalized Pareto kernel}

Take $k(t)=\frac{1}{(t+1)^c}$. Then
\begin{align*}
\check{k}(z) &  =\int_{0}^{\infty}\frac{t^{-z-1}}{(t+1)^c}dt=B(-z,c+z)
\end{align*}
for $z\in(-c,0)$.
We have $k'(t)=-\frac{c}{(t+1)^{c+1}}$ hence
\begin{align*}
\check{k'}(z) &  =-c\int_{0}^{\infty}\frac{t^{-z-1}}{(t+1)^{c+1}}dt =-cB(-z,c+1+z)
\end{align*}
defined for $z\in(-c-1,0)$.

\section{BFRY and related distributions}
\label{sec:bfry}
\subsection{BFRY distribution}
The BFRY distribution, first named in \cite{Devroye2014} after the work of Bertoin, Fujita, Roynette, and Yor~\citep{Bertoin2006},
arises much earlier in various contexts~\citep{Pitman1997, Winkel2005}. Recently it was highlighted in \cite{Lee2016} as a finite-dimensional
approximate distribution for stable, generalized gamma, and special case of stable-beta processes. The density of a BFRY distribution with parameter $\sigma < (0, 1)$
is written as
\[
\bfry(dw;\sigma) = \frac{\sigma w^{-1-\sigma}(1-e^{-w})dw.}{\Gamma(1-\sigma)}
\]
One can easily verify that the distribution can be simulated as a ratio of independent gamma and beta random variables.
\[
G \sim \gammadist(1-\sigma, 1), \quad B \sim \betadist(\sigma, 1),\quad \frac{G}{B} \overset{d}= \bfry(\sigma).
\]

\subsection{Exponentially-tilted BFRY distribution}
\label{subsec:etbfry}
In \cite{Lee2016,James2015}, the exponentially-tilted version of BFRY distribution was discussed. The density of exponentially-tiled random variable with parameters $\sigma \in (0,1)$,
$c > 0$ and $\tau > 0$ is
\[
\etbfry(dw; \sigma, t, \tau) = \frac{\sigma w^{-1-\sigma}e^{-\tau w}(1-e^{-tw})dw}{\Gamma(1-\sigma)((t+\tau)^\sigma - \tau^\sigma)}.
\]
Then it is easy to show that
\[
G \sim \gammadist(1-\sigma, 1), \quad U \sim \unifdist(0,1), \quad G \cdot ((t + \tau)^\sigma(1-U) + \tau^\sigma U)^{-\frac{1}{\sigma}} \overset{d}= \etbfry(\sigma, t, \tau).
\]

\label{subsec:bfry}
\subsection{Generalized BFRY distribution}
The generalized BFRY distribution, first discussed in \cite{Ayed2019a}, is obtained by generalizing the sampling procedure of BFRY distribution. The generalized
BFRY distribution with parameter $\sigma\in(0,1)$ and $\kappa > \sigma$ is obtained as
\[
G \sim \gammadist(\kappa-\sigma, 1), \quad B \sim \betadist(\sigma, 1), \quad \frac{G}{B} \overset{d}= \gbfry(\kappa, \sigma).
\]
By a simple algebra, we obtain the density as
\[
\gbfry(dw;\kappa, \sigma) = \frac{\sigma w^{-1-\sigma} \gamma(\kappa, w)dw}{\Gamma(\kappa-\sigma)},
\]
where $\gamma(\cdot,\cdot)$ is the lower incomplete gamma function.

\label{subsec:gbfry}
\subsection{Exponentially-tilted generalized BFRY distribution}
\label{subsec:etgbfry}
The density of exponentially-tilted generalized BFRY distribution with parameter $\sigma\in(0,1), \kappa>\sigma, t>0$ and $\tau>0$ is
\[
\etgbfry(dw;\kappa, \sigma, t, \tau) = \frac{w^{-\sigma-1} e^{-\tau w} \gamma(\kappa, tw)}{\tau^\sigma \Gamma(\kappa-\sigma) B_{\frac{t}{t+\tau}}(\kappa, -\sigma)},
\]
where $B_x(\cdot,\cdot)$ is the incomplete beta function.  A random variable having this distribution can be simulated by rejection sampling. Alternatively, note that
\eq{
\etgbfry(w;\kappa,\sigma,t,\tau) &\propto w^{-\sigma-1} e^{-\tau w} \gamma(\kappa, tw)  \\
&= \sum_{j=0}^\infty \frac{\Gamma(\kappa)(tw)^{j+1}e^{-tw}}{\Gamma(\kappa+j+1)} w^{-\sigma-1} e^{-\tau w} \\
&= \sum_{j=0}^\infty \frac{\Gamma(\kappa)}{\Gamma(\kappa+j+1)} t^{\kappa+j} w^{\kappa+j-\sigma-1} e^{-(t+\tau)w} \\
&= \sum_{j=0}^\infty \frac{t^{\kappa+j}\Gamma(\kappa)\Gamma(\kappa+j-\sigma)}{(t+\tau)^{\kappa+j-\sigma} \Gamma(\kappa+j+1)} \gammadist(w;\kappa+j-\sigma, t+\tau).
}
which means that the distribution is an infinite mixture of gamma distributions with mixing proportion
\[
\frac{\Gamma(\kappa)(t+\tau)^\sigma}{\tau^\sigma \Gamma(\kappa-\sigma)B_{\frac{t}{t + \tau}}(\kappa, -\sigma)} \times
\bigg( \bigg(\frac{t}{t+\tau}\bigg)^{\kappa+j} \frac{\Gamma(\kappa+j-\sigma)}{\Gamma(\kappa+j+1)}
\bigg)_{j\geq 1}.
\]
Hence, sampling is straightforward as first sampling the component $j$ from above infinite discrete distribution and sampling from corresponding gamma distribution.

The expoentially-tilted GBFRY distribution has a nice property to be a conjugate prior for Poisson, gamma, normal with fixed mean, and Pareto. Let $W \sim \etgbfry(\kappa, \sigma, t, \tau)$.
Then, for Poisson,
\[
 X | W \sim \mathrm{Poisson}(\lambda W) \Rightarrow W | X \sim \etgbfry(\kappa, \sigma-X, t, \tau + \lambda).
\]
For gamma,
\[
 X | W \sim \gammadist(a, W) \Rightarrow W | X \sim \etgbfry(\kappa, \sigma-a, t, \tau+X).
\]
For normal,
\[
X | W \sim \calN(\mu, 1/W) \Rightarrow W | X \sim \etgbfry\bigg( \kappa, \sigma-\frac{1}{2}, t, \tau + \frac{(X-\mu)^2}{2}\bigg).
\]
For Pareto,
\[
X | W \sim \mathrm{Pareto}(x_0, W) \Rightarrow W | X \sim \etgbfry( \kappa, \sigma-1, t, \tau + \log(X/x_0)).
\]


\subsection{Inverse generalized BFRY}
One can also consider the counterpart of generalized BFRY distribution where gamma is replaced with inverse gamma. We define inverse generalized BFRY distribution, whose pdf is written as
\label{subsec:igbfry}
\eq{
\igbfry(w; \kappa, \sigma) &= \frac{\sigma}{\Gamma(\kappa+\sigma)} w^{-1-\sigma} \Gamma(\kappa, w^{-1}) \\
&=\frac{\sigma}{\Gamma(\kappa+\sigma)} \int_1^\infty w^{-\kappa-\sigma-1} v^{\kappa-1} e^{-v/w} dv \\
&= \int_1^\infty v^{-1} \cdot \frac{(w/v)^{-\kappa-\sigma-1} e^{-v/w} }{\Gamma(\kappa+\sigma)} \cdot \sigma v^{-\sigma-1} \\
&= \int_0^1 u \cdot \igammadist(wu ; \kappa+\sigma, 1) \cdot \betadist(u ; \sigma, 1) du.
}
Hence, one can realize that
\[
G \sim \igammadist(\kappa+\sigma, 1), \quad B \sim \betadist(\sigma, 1), \quad
\frac{G}{B} \overset{d}= \igbfry(\kappa, \sigma).
\]
This distribution corresponds to the truncated exchangeable density $\varphi_t(w)$ of stable process with inverse gamma arrival times.

\subsection{Exponentially-tilted inverse generalized BFRY}
\label{subsec:etigbfry}
Finally, we consider an exponentially tilted version of inverse GBFRY distribution, whose pdf is written as
\eq{
\etigbfry(w; \kappa, \sigma, t, \tau) \propto w^{-1-\sigma} e^{-\tau w} \Gamma(\kappa, (tw)^{-1}).
}
Unfortunately, we don't have an analytic expression for the normalization constant. We can still sample from this distribution via rejection sampling.
This distribution arises as the truncated exchangeable density $\varphi_t(w)$ of generalized gamma process with inverse gamma arrival times.

\section{Detailed derivations of the results in \cref{sec:examples} and additional examples}
\label{sec:supp:examples}
\subsection{Exponential arrival times}
\label{sec:supp:examples:exponential}

\paragraph{Generalized gamma.} In the case of the size measure \eqref{eq:LevyGGP} with $\alpha > 0, \sigma \in (0,1)$ and $\tau \geq 0$, we have
\[
\Psi(t) = \frac{\alpha}{\sigma}\{ (t+\tau)^\sigma - \tau^\sigma \}, \quad
\psi(t) = \frac{\alpha}{(\tau + \sigma)^{1-\sigma}}.
\]
The arrival times are thus generated as
\[
T_i =\Psi^{-1}(\xi_i)= \bigg( \frac{\sigma}{\alpha}\xi_i + \tau^\sigma\bigg)^{\frac{1}{\sigma}} - \tau.
\]

The conditional distribution for the sequential construction is
\[
\phi_t(dw ) = \gammadist( w ; 1-\sigma, t+\tau)dw.
\]
In summary, the sequential construction for the GGP, is given by
\begin{equation}
W_i\mid \xi_i\sim \gammadist\Bigg(1-\sigma,\bigg( \frac{\sigma}{\alpha}\xi_i + \tau^\sigma\bigg)^{\frac{1}{\sigma}}\Bigg).\label{eq:GGPexpo}
\end{equation}

\paragraph{Comparison to Rosinski's series representation for the GGP}
Rosinski~\cite{Rosinski2001a,Rosinski2007} proposed the following series representation for the GGP/tempered stable process
$$
W_i=\min\left (\left (\frac{\xi_i\sigma\Gamma(1-\sigma)}{\alpha}\right )^{-1/\sigma}  ,e_i u_i^{1/\sigma}\right )
$$
where $e_i\overset{iid}{\sim}\expdist(\tau)$,$u_i\overset{iid}{\sim}\unifdist(0,1)$. For $i$ large, $W_i=\left(\frac{\xi_i\sigma\Gamma(1-\sigma)}{\alpha}\right )^{-1/\sigma}$ with high probability, which corresponds to the inverse-L\'evy construction for the stable process, and this construction has the same asymptotic error rate as the inverse-L\'evy construction for the GGP. The asymptotic error of Rosinski's representation is therefore lower than the asymptotic error for the series defined by \cref{eq:GGPexpo}, by a factor $\Gamma(1-\sigma)^{1/\sigma}(1-\sigma)\in(1,2)$, according to~\cref{tab:kernels}.

\subsection{Gamma arrival times}

\textbf{Generalized gamma.} Consider the generalized gamma process with \eqref{eq:LevyGGP}, $\alpha > 0$, $\sigma \in (0,1)$ and $\tau\geq 0$. We have
\eq{
\psi(t) &= \int_0^\infty \frac{\kappa^\kappa w^\kappa t^{\kappa-1} e^{-\kappa w t}}{\Gamma(\kappa)} \cdot \frac{\alpha}{\Gamma(1-\sigma)} w^{-\sigma-1} e^{-\tau w} dw \\
&= \frac{\alpha\kappa^\sigma\Gamma(\kappa-\sigma)}{\Gamma(\kappa)\Gamma(1-\sigma)} \frac{t^{\kappa-1}}{(t + \tau/\kappa)^{\kappa-\sigma}} \\
\Psi(t) &= \int_0^t \psi(s) ds \\
&= \frac{\alpha\kappa^\sigma\Gamma(\kappa-\sigma)}{\Gamma(\kappa)\Gamma(1-\sigma)} \int_0^t \frac{s^{\kappa-1}}{(s + \tau/\kappa)^{\kappa-\sigma}}ds \\
&=\left \{
\begin{array}{ll}
  \frac{\alpha\tau^\sigma\Gamma(\kappa-\sigma)}{\Gamma(\kappa)\Gamma(1-\sigma)} B_{\frac{\kappa t}{\kappa t + \tau}}(\kappa, -\sigma) & \text{if }\tau>0 \\
  \frac{\alpha\kappa^\sigma\Gamma(\kappa-\sigma)}{\sigma\Gamma(\kappa)\Gamma(1-\sigma)} t^\sigma & \text{if }\tau=0
\end{array}\right .
}
where $B_x(a, b)$ is the incomplete beta function. For $\tau=0$, $\Psi^{-1}$ has the analytic expression
$$\Psi^{-1}(\xi)=\bigg( \frac{\sigma\Gamma(\kappa)\Gamma(1-\sigma)\xi}{\alpha \kappa^\sigma \Gamma(\kappa-\sigma)} \bigg)^{\frac{1}{\sigma}}. $$
For $\tau>0$, there is no analytic expression for $\Psi^{-1}$.
For the sequential construction, we get
\[
\phi_t(dw) = \gammadist(w ; \kappa - \sigma, \kappa t + \tau)dw.
\]
For the exchangeable and iid constructions, we obtain
\[
\varphi_t(dw) = \left \{
  \begin{array}{ll}
    \frac{w^{-\sigma-1} e^{-\tau w}\gamma(\kappa, \kappa t w)}{\tau^\sigma \Gamma(\kappa-\sigma) B_{\frac{\kappa t}{\kappa t + \tau}}(\kappa, -\sigma)}dw & \text{if }\tau>0 \\
     \frac{\sigma w^{-1-\sigma}\gamma(\kappa, \kappa tw)}{(\kappa t)^\sigma \Gamma(\kappa-\sigma)}& \text{if }\tau=0 \\
  \end{array}
\right.
\]
When $\tau=0$, $\varphi_t(dw)$ is the distribution of a generalized BFRY distribution (\cref{subsec:gbfry}). When $\tau > 0$, $\varphi_t(dw)$ corresponds to the distribution of exponentially-tilted generalized BFRY (\cref{subsec:etgbfry}).

\subsection{Inverse gamma arrival times}
\label{sec:supp:examplesIG}
Take
\[
\lambda_w(dt) = \igammadist(t ; \kappa, \kappa/w)dt.
\]
\paragraph{Stable process.}
Consider the stable process with size measure \eqref{eq:LevyGGP}, $\alpha > 0$, $\sigma \in (0,1)$ and $\tau = 0$. We have
\[
\psi(t) = \frac{\alpha \Gamma(\kappa + \sigma)}{\kappa^\sigma \Gamma(1-\sigma)\Gamma(\kappa) t^{1-\sigma}},
\quad \Psi(t) = \frac{\alpha\Gamma(\kappa+\sigma) t^\sigma}{\sigma\kappa^\sigma\Gamma(1-\sigma)\Gamma(\kappa)},
\quad
T_i = \bigg(\frac{\sigma \kappa^\sigma \Gamma(\kappa)\Gamma(1-\sigma) \gamma_i}{\alpha \Gamma(\kappa+\sigma)}\bigg)^{\frac{1}{\sigma}}.
\]

For the sequential construction, we have
\[
\phi_t(dw) = \igammadist(dw; \kappa  + \sigma, \kappa/t).
\]
For the exchangeable construction, we get
\[
\varphi_t(dw) = \frac{\sigma\kappa^\sigma}{t^\sigma\Gamma(\kappa+\sigma)} w^{-1-\sigma} \Gamma\bigg(\kappa + \sigma, \frac{\kappa}{tw}\bigg).
\]
which correspond to inverse generalized BFRY distribution. We can sample from this as $x \sim \igammadist(\kappa+\sigma, 1), y\sim \betadist(\sigma, 1), w = \frac{\kappa x}{ty}$. See \cref{subsec:igbfry} for more details.

The case $\kappa=1$ is of particular interest, as it leads to a tractable novel representation for the GGP, and provide a novel way of interpreting the classical iid approximation of the beta process.

\paragraph{Generalized gamma process.}
Consider GGP with size measure \eqref{eq:LevyGGP} with $\alpha > 0, \sigma \in [0, 1)$ and $\tau > 0$. Take inverse gamma arrival time with $\kappa=1$. We have
\begin{equation}
\psi(t) = \frac{2\alpha(\tau t)^{\frac{\sigma+1}{2}} K_{-\sigma-1}(2\sqrt{\tau/t})}{t^2 \Gamma(1-\sigma)},\quad
\Psi(t) = \frac{2\alpha (\tau t)^{\frac{\sigma}{2}}K_{-\sigma}(2\sqrt{\tau/t})}{\Gamma(1-\sigma)},
\end{equation}
where $K_\nu(\cdot)$ is a modified Bessel function of the second kind. Unfortunately, the arrival time is not given analytically, so we may resort to a numerical root finding algorithm
to compute $T_i = \Psi^{-1}(\xi_i)$. The sequential construction is then given by
\begin{equation}
\phi_t(dw) = \frac{(\tau t)^{\frac{-\sigma-1}{2}} w^{-\sigma-2} e^{-\tau w - \frac{1}{wt}}dw}{2K_{-\sigma-1}(2\sqrt{\tau/t})} = \gigdist(dw ; -\sigma-1, 2\tau, 2/t),
\end{equation}
where $\gigdist(p,a,b)$ is a generalized inverse Gaussian distribution with parameters  $p\in\Real, a>0, b>0$. The exchangeable construction is given by
\begin{equation}
\varphi_t(w) = \frac{(\tau t)^{-\frac{\sigma}{2}} w^{-\sigma-1} e^{-\tau w-\frac{1}{wt}}dw}{2K_{-\sigma}(2\sqrt{\tau/t})} = \gigdist(dw;-\sigma, 2\tau, 2/t).
\end{equation}
This particular case, which seems to be novel to the best of our knowledge, is useful because it covers the gamma process ($\sigma = 0$). It also includes the stable process $(\tau = 0)$
as its limiting case - as $\tau \to 0$,
\begin{equation}
\phi_t(dw) \to \igammadist(dw ; \sigma+1, 1/t), \quad \varphi_t(w) \to \igammadist(dw ; \sigma, 1/t).
\end{equation}
The sequential construction is impractical since we have to invert $\Psi$ for each $t_i$, but the exchangeable construction requires only one inversion for $t_{n+1}$.

\paragraph{Beta process.}
Consider the beta process:
\begin{equation}
\rho(dw) = \alpha w^{-1} \1{0<w<1} dw\label{eq:BP},
\end{equation}
Take the bijective transformation $u=-(\alpha\log(w))^{-1}$ which gives the measure on $(0,\infty)$
\begin{equation}
\rho(du) = u^{-2} du
\end{equation}
Note that $\rho(du)$ is not a L\'evy measure. Using the inverse gamma kernel with $\kappa=1$ to obtain a series approximation for $\rho(du)$, we obtain
\begin{align}
\Psi(t)=\int_0^\infty w^{-2}e^{-1/(wt)}dw=t
\end{align}
and for the iid model we have $\widetilde U_i\sim \widetilde\phi_n(du)$ where
$$
\widetilde\phi_n(du)=\frac{1}{n}u^{-2}e^{-1/(nu)}du
$$
which is the distribution of an inverse gamma random variable with parameter $(1,1/n)$. Setting the inverse transformation $\widetilde W_i =e^{-1/(\alpha\widetilde U_i)}$, we obtain
$$
\widetilde W_i\sim \betadist(\alpha/n,1)
$$
which corresponds to the classical iid approximation for the beta process, described in the introduction.

This construction can also be obtained directly with different arrival time distribution. Consider
\begin{equation}
\lambda_w(dt) = \frac{-\alpha w^{\frac{\alpha}{t}} \log w}{t^2}dt, \quad \Lambda_w(t) = w^{\frac{\alpha}{t}}.
\end{equation}
A sample from this distribution can be obtained as
\begin{equation}
t' \sim \expdist(\log w^{-\alpha}), \quad t = 1/t'.
\end{equation}
Then we have
\begin{equation}
\Psi(t) = t, \quad \psi(t) = 1,
\end{equation}
and as a result
\begin{equation}
\phi_t(dw) = -\frac{\alpha}{t^2} w^{\frac{\alpha}{t}-1} \log w \1{0<w<1} dw, \quad \varphi_t(dw) = \frac{\alpha}{t} w^{\frac{\alpha}{t}-1} \1{0<w<1}dw
\end{equation}
A sample from $\phi_t(w)$ can be obtained by
\begin{equation}
v \sim \gammadist(2, 1/t), \quad w = e^{-v/\alpha},
\end{equation}
and the exchangeable construction correspond to the iid beta approximation.

\subsection{Generalized Pareto arrival time distribution}
\label{sec:supp:examplesGP}
Consider the following arrival time distribution,
\begin{equation}
\lambda_w(dt) = \frac{cw}{(tw+1)^{c+1}}dt, \quad \Lambda_w(t) = 1 - \frac{1}{(tw+1)^c}
\end{equation}
where $c>0$.
\paragraph{Stable beta process.} Consider the stable beta process with L\'evy measure
\begin{equation}
\rho(dw) = \frac{\alpha}{B(1-\sigma, c+\sigma)} w^{-1-\sigma} (1-w)^{c+\sigma-1} \1{0<w<1} dw.
\end{equation}
With change of variable $v = \frac{w}{1-w}$, we see that
\begin{align*}
\Psi(t) &= \int_0^1 \frac{\alpha w^{-1-\sigma}(1-w)^{c+\sigma-1}}{B(1-\sigma, c+\sigma)}  \bigg( 1 - \frac{1}{(tw+1)^c} \bigg) dw \\
&= \frac{\alpha}{B(1-\sigma, c+\sigma)}\int_0^\infty  v^{-1-\sigma} \Big( (1+v)^{-c} - ((t+1)v + 1)^{-c} \Big) dv \\
&= \frac{\alpha}{\Gamma(c) B(1-\sigma, c+\sigma)} \int_0^\infty\int_0^\infty v^{-1-\sigma} e^{-vy} (1 - e^{-tyv}) \cdot y^{c-1}e^{-y}  dy dv \\
&= \frac{\alpha \Gamma(1-\sigma)}{\sigma \Gamma(c) B(1-\sigma, c+\sigma)} \int_0^\infty ((t+1)^\sigma - 1) \cdot y^{c + \sigma -1}e^{-y} dydv \\
&= \frac{\alpha c}{\sigma} ((t+1)^\sigma - 1) \\
\psi(t) &= \alpha c (t+1)^{\sigma-1}.
\end{align*}

For the sequential model we obtain
\begin{equation}
\phi_t(dw) = \frac{w^{-\sigma} (1-w)^{c+\sigma-1} (tw+1)^{-c-1}\1{0<w<1}dw}{B(1-\sigma, c+\sigma) (t+1)^{\sigma-1}}.
\end{equation}

With the change of variable $v=\frac{w}{w-1}$, we see that
\begin{align*}
\phi_t(dv) &= \frac{1}{B(1-\sigma, c+\sigma) (t+1)^{\sigma-1}} v^{-\sigma} (tv + v + 1)^{-c-1} dv \\
&= \frac{1}{B(1-\sigma, c+\sigma) (t+1)^{\sigma-1} \Gamma(c+1)} \int_0^\infty v^{-\sigma} y^{c}e^{-tvy-vy-y} dy \\
&=\int_0^\infty y \cdot \frac{(t+1)^{1-\sigma}(vy)^{-\sigma} e^{-(t+1)vy}}{\Gamma(1-\sigma)}
\cdot \frac{y^{c+\sigma-1} e^{-y}}{\Gamma(c+\sigma)}dy,
\end{align*}
and thus a sample from $\phi_t$ can be obtained as
\begin{equation}
z \sim \gammadist(1-\sigma, 1+t), \quad y \sim \gammadist(c+\sigma, 1), \quad w = \frac{z/y}{z/y+1}.
\end{equation}

For the exchangeable model, we have
\begin{equation}
\varphi_t(dw) = \frac{\sigma w^{-1-\sigma}(1-w)^{c+\sigma-1}}{  cB(1-\sigma, c+\sigma) ((t+1)^\sigma -1)} \bigg(1 - \frac{1}{(tw+1)^c}\bigg) \1{0<w<1}dw,
\end{equation}
and by a similar calculation we see that a sample from $\varphi_t$ can be obtained as
\begin{equation}
z \sim \mathrm{etBFRY}(\sigma, t, 1), \quad y \sim \gammadist(c+\sigma, 1), \quad w = \frac{z/y}{z/y+1}.
\end{equation}

%

\section{Details on simulations and additional results}
\label{sec:additional_simulations}
We are interested in measuring the error $R_{n}|T_{n+1}$, apparently  not tractable. Hence, we consider the approximate error $R_{n,\hat n}$ defined for $\hat n > n$ as
\[
R_{n, \hat n}  = \sum_{i=n}^{\hat n} W_i,
\]
where we simulate $(W_i)_{i=1}^{\hat n}$ via sequential constructions. When no analytic expression is available (computing $\Psi^{-1}(t)$ for Gamma arrival times for GGP when $\kappa >1$,
computing $\bar\rho^{-1}(\xi)$ for the inverse-L\'evy for GGP), we resort to numerical inversion algorithm. For each configuration, we first sample a series of arrival time sequences,
and conditioned on that sample 100 series of jumps $(W_i)_{i=1}^{\hat n}$ to compute $R_{n, \hat {n}}$. We repeat this procedure 10 times for each arrival time sequence (thus 1,000 jump simulations in total for each configuration) and report the mean and standard deviations. Unless specified otherwise, we use the hyperameters $\alpha=2.0, \tau=1.0$ for Stable Process (SP) and GGP, and set $\hat n = 10^4$.
\cref{fig:simul} in the paper reports $R_{n, \hat n}$ for SP and GGP.

\cref{fig:error_long_range} shows the approximate errors for gamma and inverse gamma arrival times for SP, and gamma arrival times for GGP. We can observe that the variances quickly approach zero, except for the inverse gamma arrival time with $\kappa=1$ case for which our theory predicts to have infinite variance.

\cref{fig:C1} shows the value of constants $C_1(\sigma)$ for gamma and inverse gamma arrival times with varying $\sigma$ and $\kappa$ values. Note that lower $C_1(\sigma)$ implies lower expected error $\bbE[R_n|T_{n+1}]$ by \cref{prop:asymptoticerror}. We found that gamma arrival times exhibit lower $C_1(\sigma)$ when $0 < \sigma < 0.5$, and inverse gamma has lower $C_1(\sigma)$ when $0.5 < \sigma < 1$.
This observation is empirically confirmed in \cref{fig:gamma_vs_igamma}.

Finally, we compared the approximate error to asymptotic value of $R_n$. In case of gamma arrival times for GGP, according to \cref{prop:asymptoticerror}, we have
\eqn{\label{eq:gamma_ggp_asymp_error}
R_n  \sim \frac{(\kappa-\sigma)\Gamma(\kappa-\sigma)^{\frac{1}{\sigma}}}{(1-\sigma)\Gamma(\kappa)^{\frac{1}{\sigma}}} \bigg( \frac{\alpha}{\Gamma(1-\sigma)} \bigg) \sigma^{1-\frac{1}{\sigma}} n^{1-\frac{1}{\sigma}}.
}
\cref{fig:gamma_ggp_asymptotic} compares empirical approximate errors with $\hat n = 10^6$ to \eqref{eq:gamma_ggp_asymp_error} with different values of $\sigma$. We fixed $\kappa=1$ here. One can see that
the approximate error quickly approaches asymptotic errors.

\begin{figure}
\centering
\subfigure[SP-Gamma]{\includegraphics[width=0.325\linewidth]{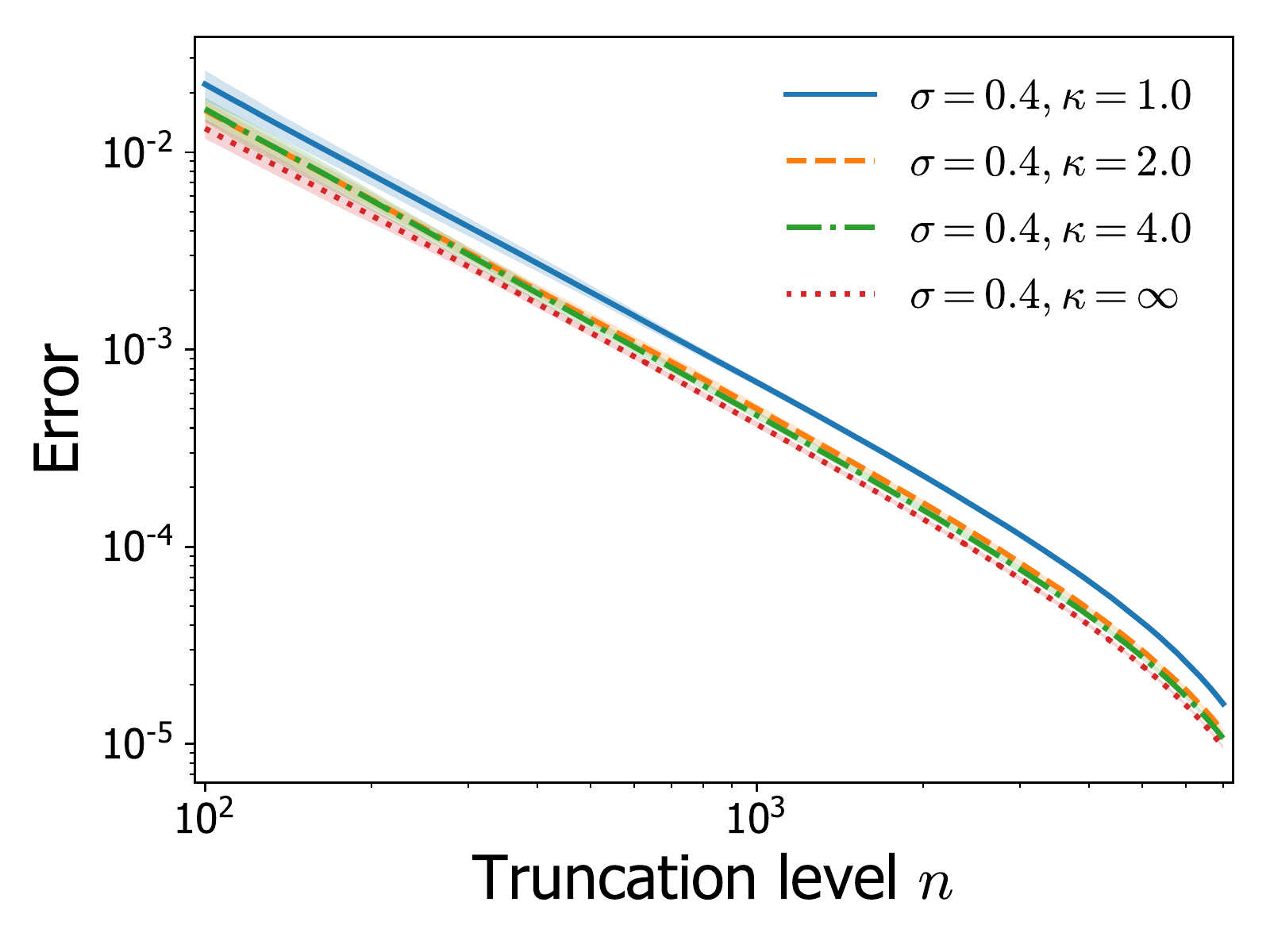}}
\subfigure[SP-iGamma]{\includegraphics[width=0.325\linewidth]{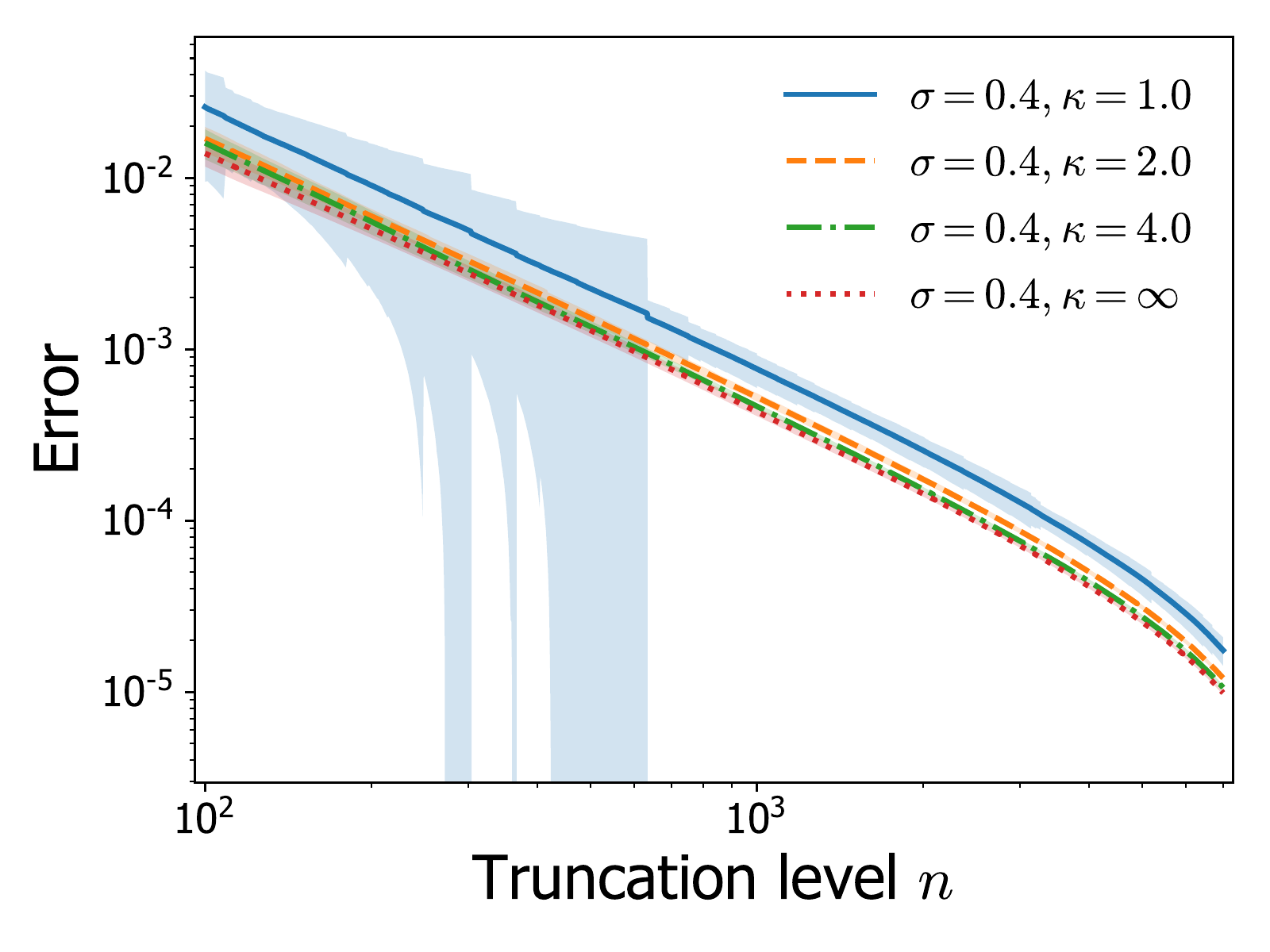}}
\subfigure[GGP-Gamma]{\includegraphics[width=0.325\linewidth]{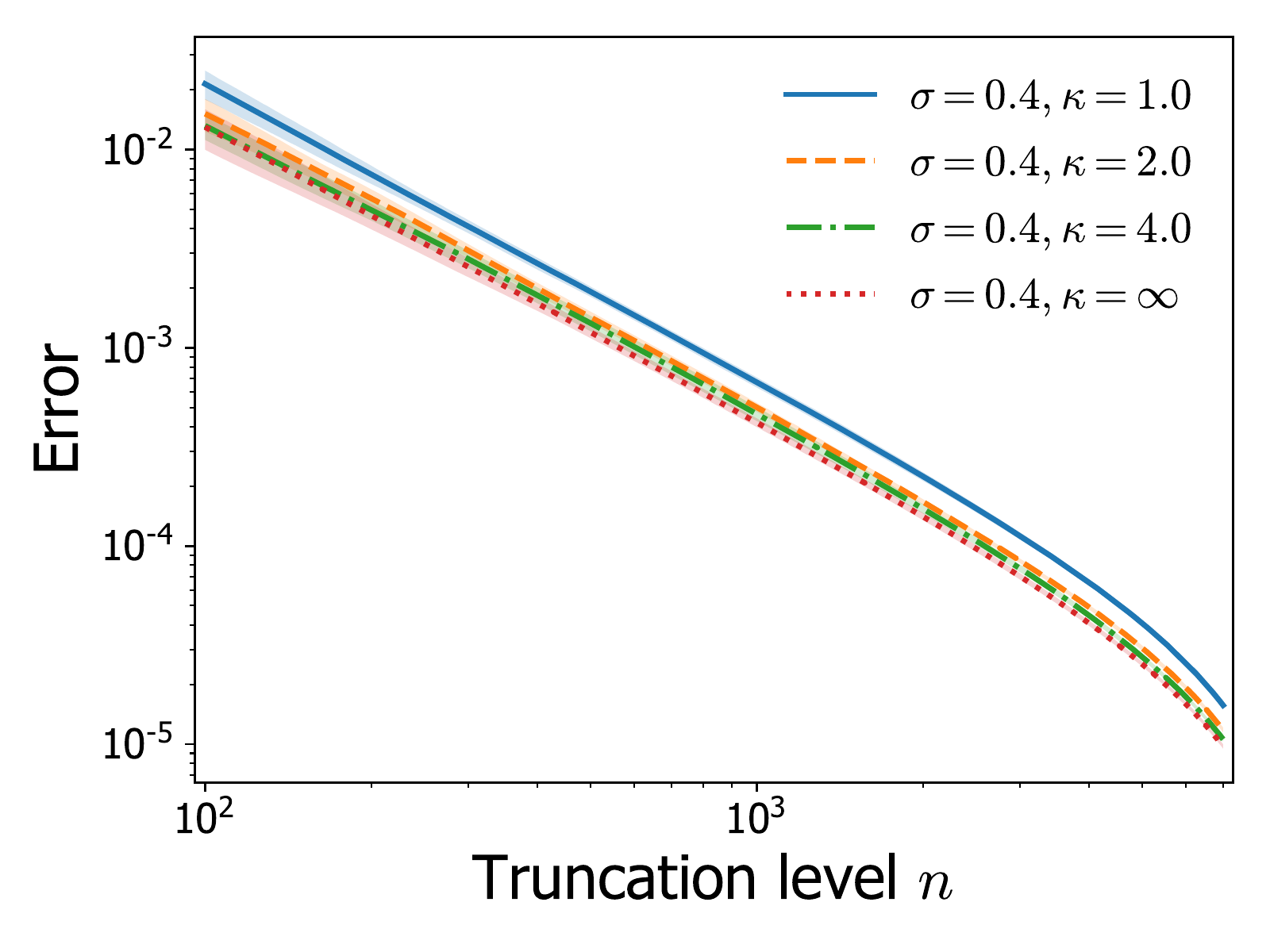}}
\caption{Gamma arrival times, inverse gamma arrival times for stable processes (a-b) and gamma arrival times for GGP (c). Plotted for whole range. See how
variances diminishes. Note also that in case of inverse gamma arrival times with $\kappa=1$, the variances diverge as our theory predicts.}
\label{fig:error_long_range}
\end{figure}

\begin{figure}
\centering
\subfigure[Gamma]{\includegraphics[width=0.325\linewidth]{figures/gamma_C1.pdf}}
\subfigure[iGamma]{\includegraphics[width=0.325\linewidth]{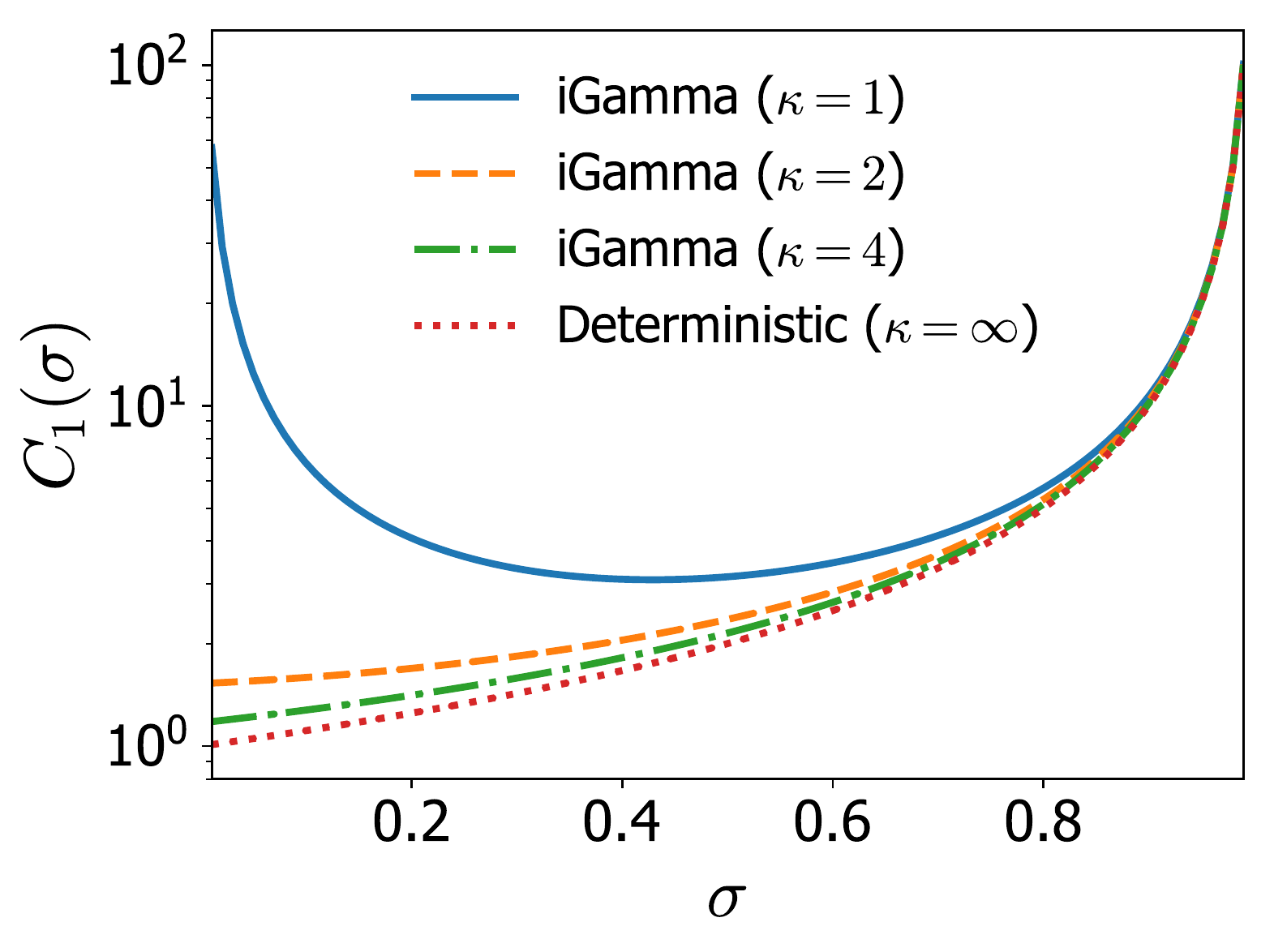}}
\subfigure[Gamma vs. iGamma]{\includegraphics[width=0.325\linewidth]{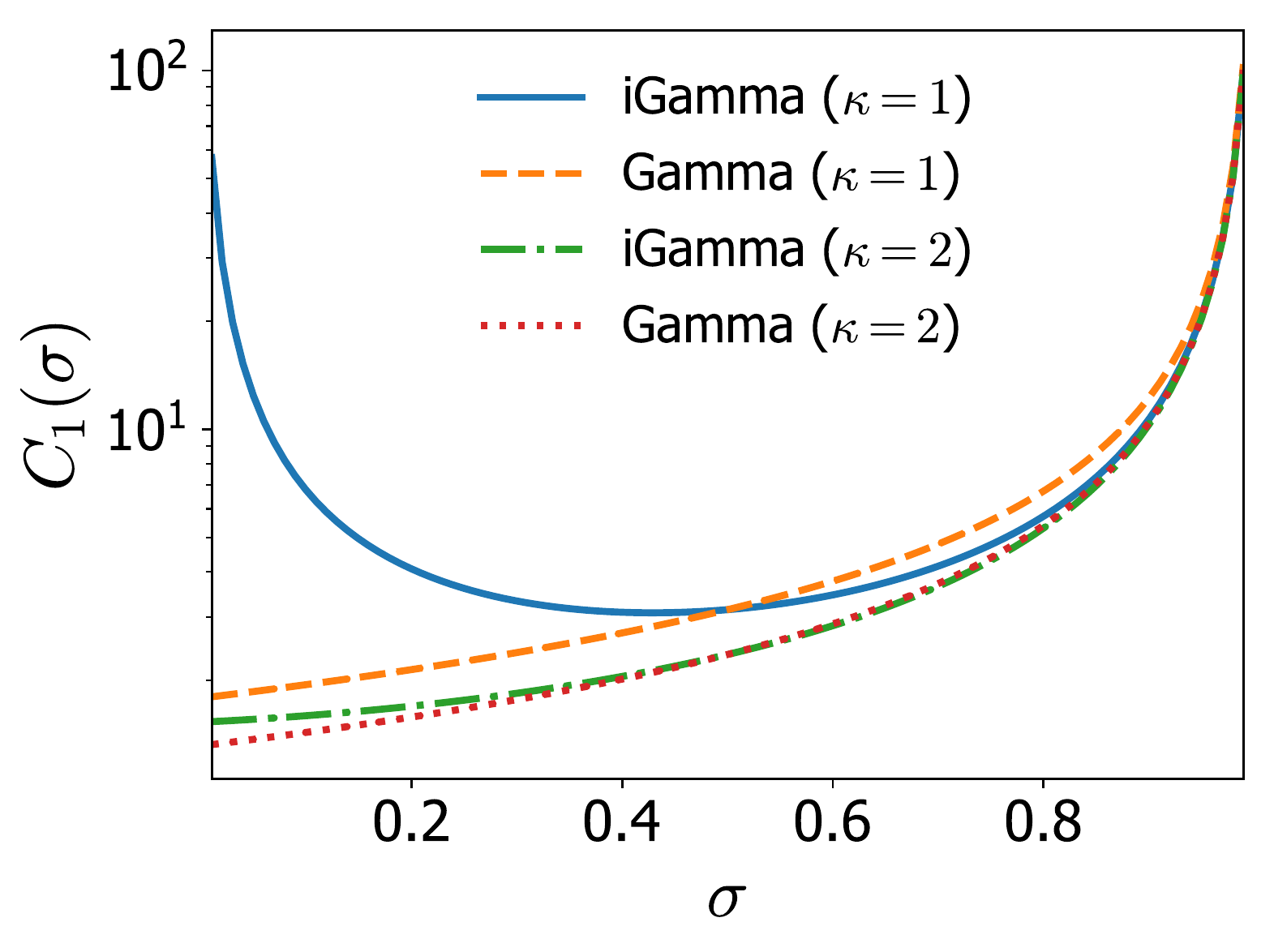}}
\caption{$C_1(\sigma)$ values for Gamma (a), inverse gamma (b) arrival times, and comparison between them (c).}
\label{fig:C1}
\end{figure}

\begin{figure}
\centering
\subfigure[$\sigma=0.4$]{\includegraphics[width=0.45\linewidth]{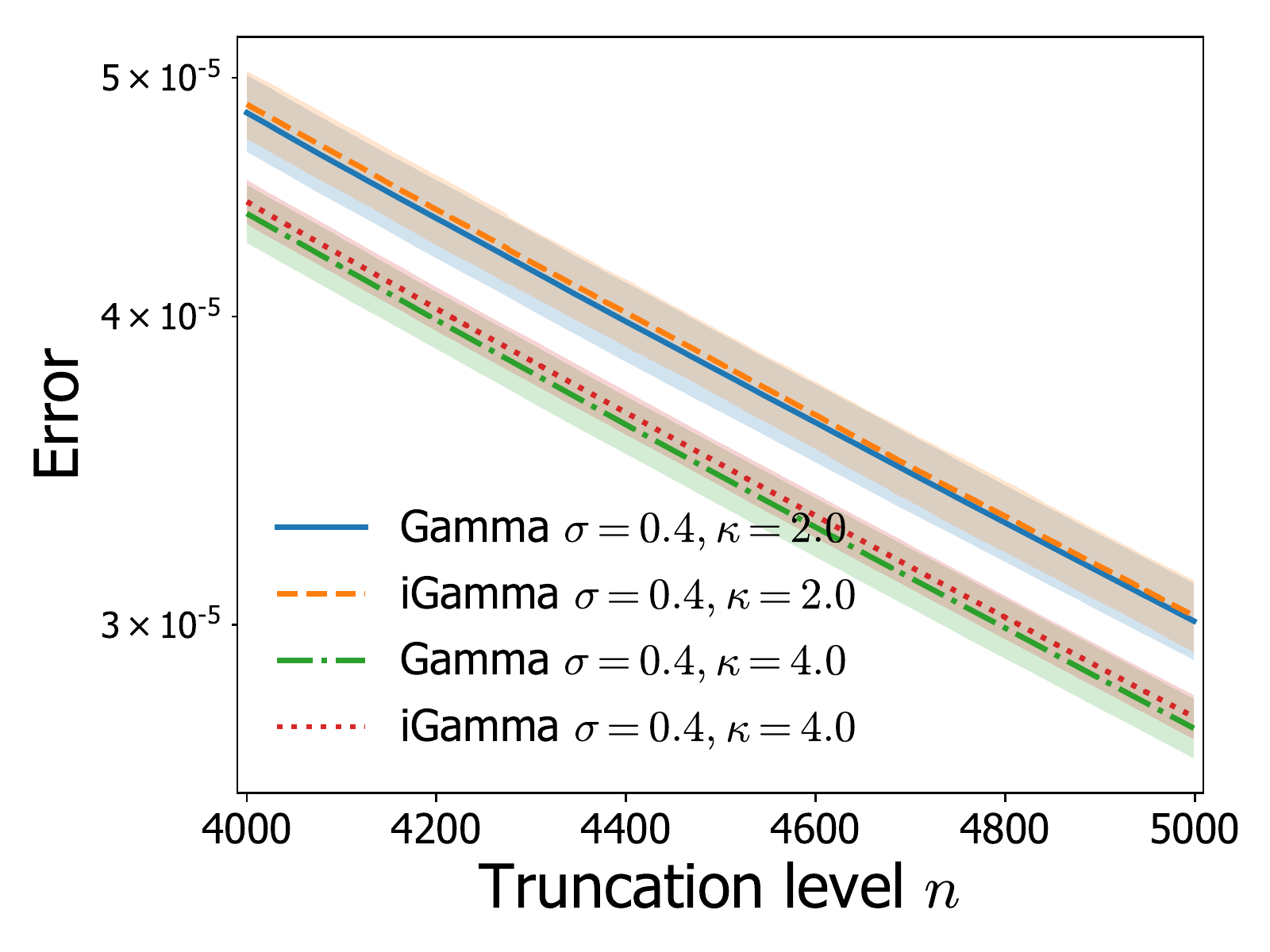}}
\subfigure[$\sigma=0.7$]{\includegraphics[width=0.45\linewidth]{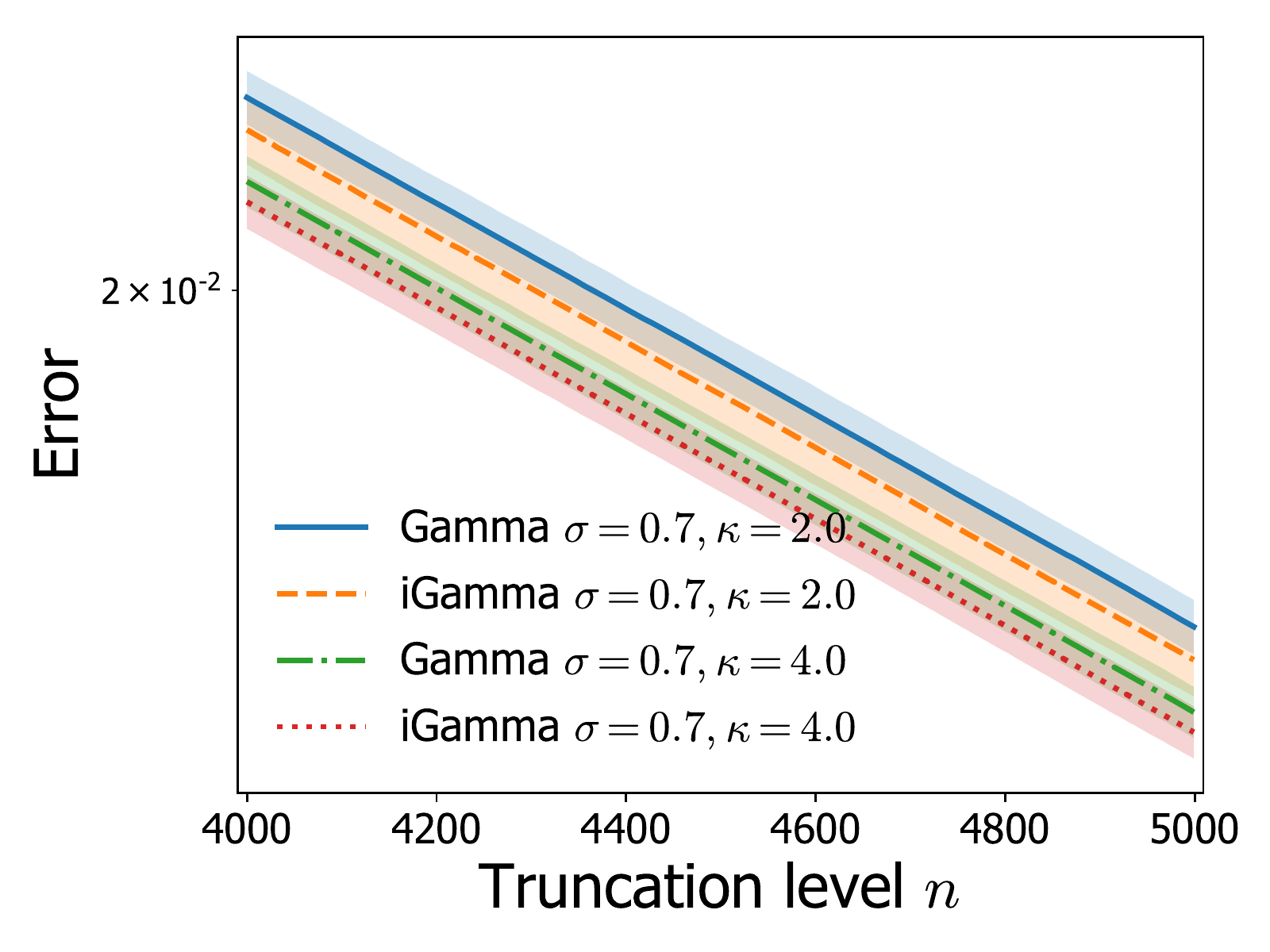}}
\caption{Gamma arrival times vs. inverse gamma arrival times for stable process, with $\sigma=0.4$ (a) and $\sigma=0.7$ (b). Gamma is better when $\sigma < 0.5$, and inverse gamma is better when $\sigma > 0.5$ as predicted in \cref{fig:C1} (c).}
\label{fig:gamma_vs_igamma}
\end{figure}

\begin{figure}
\centering
\subfigure[$\sigma = 0.4$]{\includegraphics[width=0.45\linewidth]{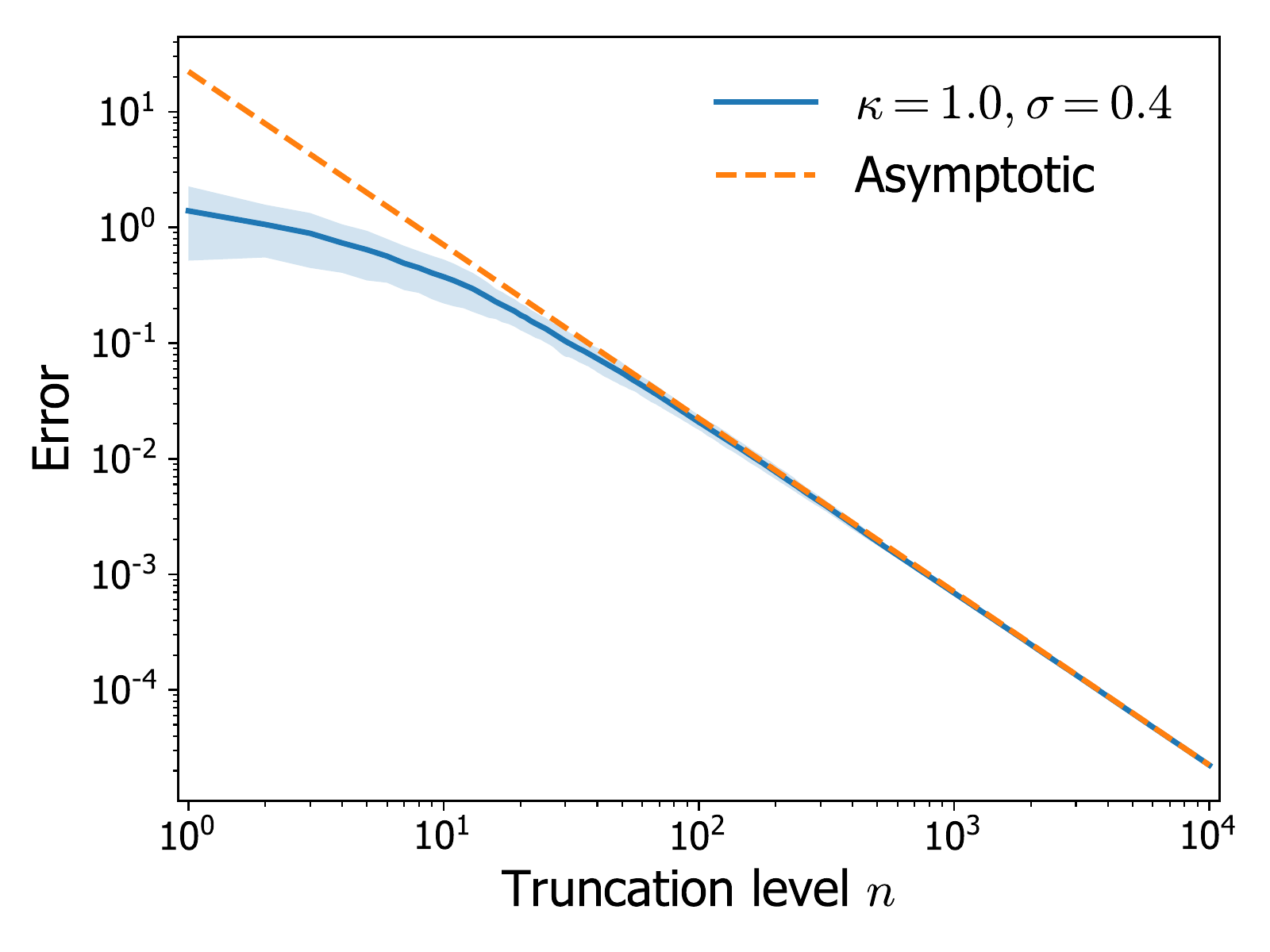}}
\subfigure[$\sigma = 0.7$]{\includegraphics[width=0.45\linewidth]{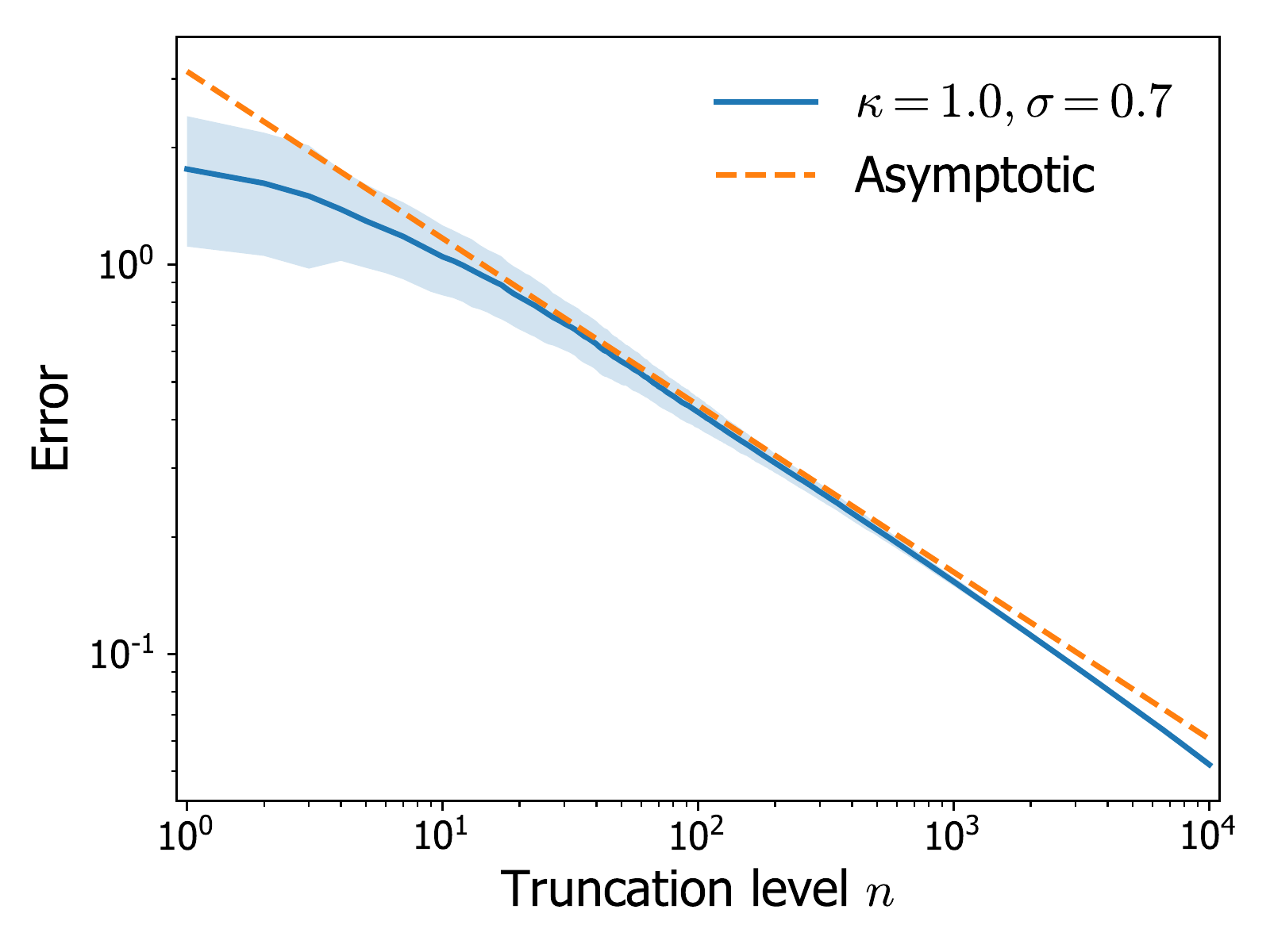}}
\caption{Empirical approximation error $R_{n,\hat n}$ compared to asymptotic error $R_n$, for (a) $\sigma=0.4$ and (b) $\sigma=0.7$.}
\label{fig:gamma_ggp_asymptotic}
\end{figure}

\section{Example on normalized GGP mixture models}
\label{sec:nggp_mixture}
Consider a hierarchical Bayesian model
\[
G = \sum_{i=1}^\infty W_i \delta_{\theta_i} \sim \crm(\rho, H), \,\,\,
\theta_j \iidsim \frac{G}{G(S)}, \,\,\,
X_j | \theta_j \iidsim L(\theta_j) \textrm{ for } j=1,\dots, m.
\]
We approximate infinite dimensional process $G$ with finite iid process $\tilde G_n$. Then, the rest of the model can be rewritten as
\[
Z_j | \widetilde G_n \iidsim \catdist\bigg(\frac{\widetilde w_{n,i}}{\sum_{i'=1}^n \widetilde w_{n,i'}}\bigg), \quad
X_j | z_j, \widetilde G_n \iidsim L(\theta_{z_j}),
\]
We construct $\widetilde G_n$ via gamma arrival times with $\kappa > 1$. Using \cref{prop:asymptoticPsiinv}, we have
\[
\widetilde\varphi_n(w) = \etgbfry\bigg( w ; \kappa, \sigma, \bigg( \frac{\sigma\Gamma(\kappa)\Gamma(1-\sigma)n}{\alpha\Gamma(\kappa-\sigma)}\bigg)^{\frac{1}{\sigma}}, \tau\bigg).
\]
Note that we used the function $f(n) = \big(\frac{\sigma \Gamma(\kappa)\Gamma(1-\sigma)n}{\alpha\kappa^\sigma\Gamma(\kappa-\sigma)}\big)^{\frac{1}{\sigma}}$ in place of $\Psi^{-1}(n)$,
thus both evaluation of the pdf and sampling can be done without any numerical approximation. The joint density of the mixture model is then written as
\[
w_\bullet^{-n}\prod_{k=1}^n w_k^{m_k} \widetilde\varphi_n(w_k) \bigg[ \prod_{j: z_j=k} \ell(x_j | \theta_k) \bigg] h(\theta_k),
\]
where $\ell$ and $h$ are the density for $L$ and $H$, $w_\bullet \defas \sum_{i=1}^n w_i$, and $m_k \defas \sum_{j=1}^m \1{z_j=k}$. Now we are free to any posterior inference algorithm, such as variational inference or stochastic gradient MCMC as in \cite{Lee2016}.
\end{appendices}

\end{document}